\newtheorem{theorem}{Theorem}[section]
\newtheorem{corollary}[theorem]{Corollary}
\theoremstyle{remark}\newtheorem{remark}[theorem]{Remark}
\newcommand{\be}{\begin{equation}}
\newcommand{\ee}{\end{equation}}
\newcommand{\z}{\mathbf{z}}
\newcommand{\R}{\mathbb{R}}
\newcommand{\x}{\mathbf{x}}
\newcommand{\fer}[1]{(\ref{#1})}
\begin{document}
\title{Impact of interaction forces in first order many-agent systems for swarm manufacturing}

\author[1,3]{Ferdinando Auricchio \thanks{\texttt{ferdinando.auricchio@unipv.it}}} 
\author[1]{Massimo Carraturo \thanks{\texttt{massimo.carraturo@unipv.it}}}
\author[2,3]{Giuseppe Toscani \thanks{\texttt{giuseppe.toscani@unipv.it}}}
\author[2]{Mattia Zanella \thanks{\texttt{mattia.zanella@unipv.it}}}

\affil[1]{Dept. of Civil Engineering and Architecture, University of Pavia} 
\affil[2]{Dept. of Mathematics ``F. Casorati'', University of Pavia} 
\affil[3]{IMATI "E. Magenes", CNR, Pavia}

\date{}

\maketitle

\begin{abstract}
We study the large time behavior of a system of interacting agents modeling the relaxation of a large swarm of robots, whose task is to uniformly cover a portion of the domain by communicating with each other in terms of their distance.  To this end, we generalize a related result for a Fokker-Planck-type model with a nonlocal discontinuous drift and constant diffusion, recently introduced by three of the authors, of which the steady distribution is explicitly computable. For this new nonlocal Fokker-Planck equation, existence, uniqueness and positivity of a global solution are proven, together with precise equilibration rates of the solution towards its quasi-stationary distribution. Numerical experiments are designed to verify the theoretical findings and explore possible extensions to more complex scenarios. 
\medskip

\noindent{\bf Keywords:} swarm robotics, swarm manufacturing, multi-agent systems, Fokker-Planck equations\\

\end{abstract}

\tableofcontents
\section{Introduction}

In recent years the self-organizing features of large systems of interacting particles, and their application to social and life sciences, have been the subject of a huge number of research in the mathematical community. Without intending to review the huge literature on these topics, we point the interested reader to \cite{AP,C,HJKPZ,HKSF,K_etal,MT,TZ,V} and the references therein for an introduction. 

Among other approaches, the behavior of interacting particles can be conveniently captured through the powerful methodology of statistical physics, in particular resorting to collisional kinetic-type equations and their grazing limits \cite{BCC,CFRT,DM,HT,PT}. 

One of the recent fields of applications of agent-based modelling to industrial processes in which self-organizing systems play a major rule, is the fascinating field of swarm robotics, in which a swarm of interacting manufacturing agents is designed to produce complex components \cite{Ac,CH,D_etal,H,HW,Ox}.  Mathematical models can help the designer in predicting the swarm behavior, given a predefined environment and a set of process constraints \cite{A}.   An important feature of the underlying mathematical models is to be able to characterize the force of interaction between pairs of particles/agents in order to correctly capture the possible lack of information of the agents in achieving the desired goal.  To this end,  we also mention possible approaches having roots in the control literature \cite{Ahn2,CFPT,CKPP,COT}.

In this work we aim to shed light on some of the aforementioned problems starting from the kinetic model presented in \cite{ATZ}, where a large swarm of agents capable to spread uniformly over the surface of a isotropic domain $D \subset \mathbb R^d$ has been introduced and studied in terms of a suitable Fokker--Planck type equation  with discontinuous drift, which senses the distance from the boundaries of $D$. This simple task can indeed be interpreted as the deposition of a single layer in standard additive manufacturing processes. We mention that possible extensions of the presented setting may focus on general target domains on manifolds, see e.g. \cite{Ahn_etal}. 

At variance with the classical drift function considered in \cite{ATZ}, to model the strength of interactions among particles, in the present contribution we introduce a nonlocal drift operator depending on a symmetric interaction function weighting the influence of the swarm positions' distribution on a particle.  In agreement with the analysis of \cite{ATZ2}, it will be shown that the  process can be fruitfully  analyzed by resorting to  a novel Fokker-Planck-type equation, which, while possessing the same steady profile, is characterized by a linear drift and a suitable time-dependent diffusion coefficient. The time-dependence of the diffusion coefficient is closely related to the main feature of the  model, and precisely to the presence of a limited information on the direction of motion, and therefore on the position of the target domain.

For this new nonlocal Fokker--Planck equation, existence, uniqueness and positivity of a global solution are proven, together with equilibration rates of the solution towards its quasi-stationary distribution. In particular, it is shown that the limited information among particles slows down of the speed of convergence towards equilibrium of the solution. The identification of the analytical steady state of the model depends on the existence of quasi-stationary solutions of the  Fokker-Planck equation which converge in time towards a global steady continuous distribution with unitary mass. In the following we will prove such convergence in the case of constant communication rate between particles. Furthermore, in the last section we will give computational insight on the case where the communication strength depends by the relative distance between agents. This case is of particular relevance for applications, see e.g. \cite{Cavanna,CS,DOB}. 

In more detail, the paper is organized as follows. In Section \ref{sect:2} we introduce the Fokker- Planck-type model for manufacturing, mimicking the interaction of a system of agents with a given portion of the domain, as well as the interactions between particles in terms of their distance. We then rigorously study the structure of the equilibrium profile together with its relevant features in one and two dimensions. Section \ref{sect:3} will be devoted to the study of the convergence of the solution towards equilibrium. To this aim we will resort to a new Fokker– Planck type equation possessing the same steady state but different drift and diffusion operators. Explicit results are established in the 1D case together with trends characterizing convergence. Finally, in Section \ref{sect:4} we propose several numerical examples in 1D and 2D to test both the features of the model and the convergence rates.

\section{Fokker-Planck models of swarms}\label{sect:2}

We consider a system of $N\gg 1$ particles interacting with each other and  with a space domain $D \subset \mathbb R^d$. To simplify our analysis, in the following we will assume that $D$ is a $d$-dimensional sphere centered in $\x_0$ with radius $\delta>0$, i.e. $D = \{\x \in \mathbb R^d: |\x-\x_0|\le \delta\}$, being $|\cdot|$ the Euclidean distance between $\x, \mathbf y \in \mathbb R^d$. 

We assume that the system of particles is such that each particle modifies its position interacting with all the other particles. Furthermore, we assume that the particles sense the direction of motion towards the center of the sphere $D$ and, once inside $D$, they start to randomly explore the target domain. 

Let $f(\x,t)\, d\x$ denote the probability of finding a particle in the elementary volume $d\x$ around the point $\x \in \mathbb R^d$ at time $t \ge 0$. The mesoscopic model that translates this dynamics can be described by a Fokker-Planck equation with constant diffusion and discontinuous, time-dependent drift, which can be appropriately written in divergence form as follows 
\begin{equation}
\label{eq:FP}
\partial_t f(\x,t) = \nabla_\x \cdot \left[ \mathcal B[f](\x,t)\mathds{1}_{D^c}(\x)f(\x,t) + \sigma^2 \nabla_\x f(\x,t) \right],
\end{equation}
where  $\sigma^2>0$ is the positive constant coefficient of diffusion, and 
\begin{equation}
\label{eq:drift1}
\mathcal B[f](\x,t) = \lambda (\x-\x_0) + \mu \int_{\mathbb R^d} P(\x,\mathbf y)(\x - \mathbf y)f(\mathbf y,t)d\mathbf y,
\end{equation}
is the drift coefficient. In \fer{eq:drift1} the constants  $\lambda,\mu\ge 0$, such that $\lambda + \mu = 1$,  denote the intensities of the classical drift, and respectively of the  communication between particles. Moreover  $P(\cdot,\cdot) \ge 0$ is a symmetric interaction function weighting the influence on a particle in $\mathbf x$ of all the other particles in terms of their distance with $\x$, i.e. $P(\x,\mathbf y) = P(\mathbf y,\x)$. In equation \fer{eq:FP} we indicated with $\mathds{1}_{D^c}: \mathbb R^d \rightarrow \{0,1\}$ the indicator function of the complement set of the spherical domain $D$, denoted by $D^c$,  that is
\[
\mathds{1}_{D^c}(\x) = 
\begin{cases}
1 & \x \in D^c \\
0 & \x \notin D^c.
\end{cases}
\] 
According to equation \fer{eq:FP},   particles of the swarm move subject to the simultaneous presence of drift and diffusion unless they are in the target domain $D$ where only the diffusion operator survives. 

In \cite{ATZ} it has been observed how, in the case $\mu=0$, namely in absence of communication among particles, the steady state of unit mass of the resulting Fokker-Planck equation is the unique solution of the differential equation 
\begin{equation}
\label{eq:diff}
\sigma^2 \nabla_\x f(\x,t) = - (\x-\x_0) \mathds{1}_{D^c}(\x) f(\x,t), 
\end{equation}
 given by 
\begin{equation}
\label{eq:finfty_1}
f^\infty(\x) = 
\begin{cases}
\dfrac{m_1}{(2\pi \sigma^2)^{d/2}} \exp\left\{-\dfrac{|\x-\x_0|^2}{2\sigma^2}\right\} & |\x-\x_0|\ge \delta,\\
m_2 \left(\dfrac{\delta^d \pi^{d/2}}{\Gamma(d/2 + 1)} \right)^{-1} & |\x-\x_0|< \delta.
\end{cases}
\end{equation}
In \fer{eq:finfty_1} the constants $m_1,m_2>0$ are determined by imposing unitary total mass and the continuity of the steady state at the boundary of the target domain $D\subset \mathbb R^d$. These conditions correspond to the linear system
\[
\dfrac{m_1}{(2\pi\sigma^2)^{d/2}} \int_{|\x-\x_0|\ge \delta} \exp\left\{-\dfrac{|\x-\x_0|^2}{2\sigma^2}\right\} d\x+ m_2 = 1
\]
and
\[
\lim_{|\x-\x_0|\to \delta} \dfrac{m_1}{(2\pi\sigma^2)^{d/2}}\exp\left\{-\dfrac{|\x-\x_0|^2}{2\sigma^2}\right\} = \dfrac{m_2}{\delta^d \pi^{d/2}}\Gamma(\delta/2 + 1). 
\]
In \cite{ATZ} it has been  observed that the constants $m_1,m_2$ are uniquely determined by the data $\delta,\sigma^2>0$. Furthermore, it can be observed that $m_1,m_2$ are strictly positive, coherently with the fact that  \eqref{eq:finfty_1} is a distribution, see \cite{ATZ}. Precise computations can be done  in the case of 1D target domain $D = \left\{ x \in \mathbb R: |x-x_0|\le \delta\right\}\subset \mathbb R$,  where the system takes the form 
\begin{equation}
\label{eq:system1}
\begin{cases}
m_1 \left( 1-\textrm{erf}\left( \dfrac{\delta}{\sqrt{2\sigma^2}}\right)\right) + m_2 = 1 \\
\dfrac{m_1}{\sqrt{2\pi\sigma^2}}\exp\left\{-\dfrac{\delta^2}{2\sigma^2}\right\}-\dfrac{m_2}{2\delta} = 0,
\end{cases}
\end{equation}
and in the 2D case, where $D = \left\{ \x \in \mathbb R^2: |\x-\x_0|\le \delta\right\}\subset \mathbb R^2$, and the system reads 
 \begin{equation}
 \label{eq:system2}
 \begin{cases}
 m_1\exp\left\{-\dfrac{\delta^2}{2\sigma^2}\right\} +m_2 = 1 \\
 \dfrac{m_1}{2\sigma^2}\exp\left\{-\dfrac{\delta^2}{2\sigma^2}\right\} -\dfrac{m_2}{\delta^2}=0. 
 \end{cases}
 \end{equation}
In both cases, we may fix $m_2>0$ and $\delta>0$ and we may prove that there exist $m_1,\sigma^2>0$ solutions to \eqref{eq:system1} or \eqref{eq:system2} and that these are unique. 

The resulting steady state is a continuous function given by the weighted combination of a Gaussian density outside $D$ and a uniform density inside $D$. In Figure \ref{fig:1} we depict the steady state \eqref{eq:finfty_1} for several choices of $m_2$ and $\delta = 0.5$ for the 1D case and $\delta = 1/ \sqrt{\pi}$ for the 2D case. 

\begin{figure}
\centering
\includegraphics[scale = 0.3]{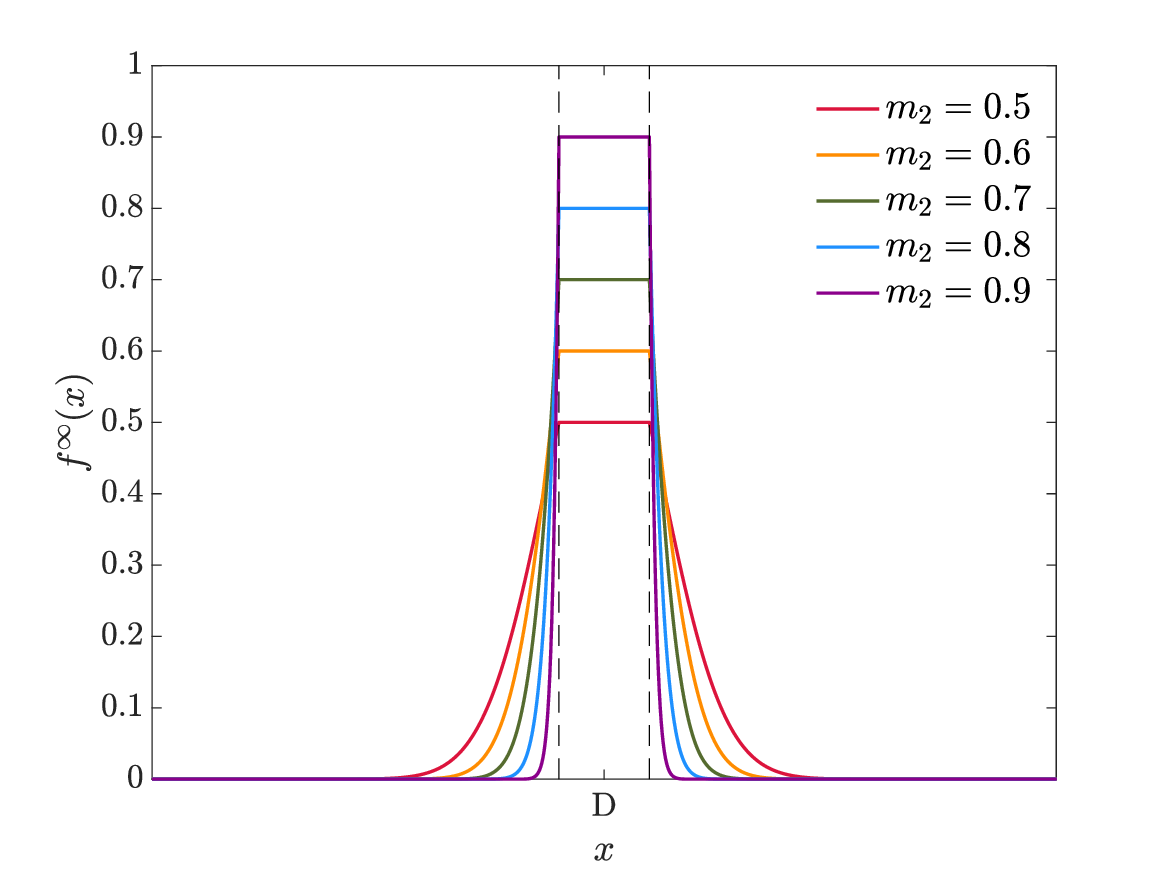}
\includegraphics[scale = 0.3]{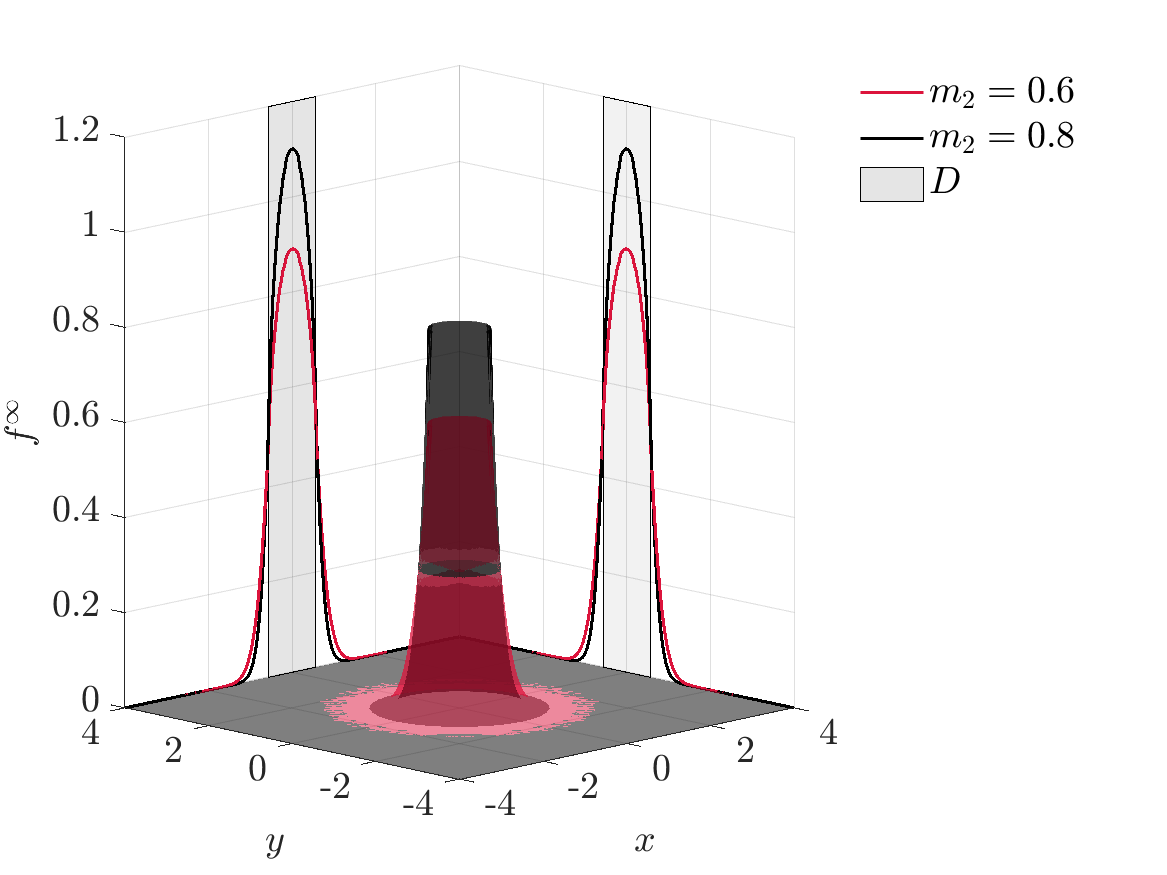}
\caption{Left: steady state distribution for the 1D problem with $\delta = 0.5$ and $x_0 = 0$. Right: steady state distribution for the 2D problem with $\delta = \sqrt{\pi^{-1}}$ and $\x_0 = (0,0)$. In both cases we fixed several values of $m_2>0$ and $\delta>0$.  }
\label{fig:1}
\end{figure}

 In the absence of communications between particles, In \cite{ATZ} it was further remarked that the equilibrium distribution $f^\infty(\x)$ defined in \eqref{eq:finfty_1} could be obtained by resorting to a surrogate Fokker-Planck equation characterized by a linear and continuous drift and a variable diffusion coefficient, given by
\begin{equation}
\label{eq:FPlambda1}
\partial_t f(\x,t) = \nabla_\x \cdot \left[(\x-\x_0)f(\x,t) + \nabla_\x (K(\x)f(\x,t)) \right],
\end{equation}
where the diffusion coefficient $K$ is 
\begin{equation}
\label{eq:kappa_lambda1}
K(\x) = 
\begin{cases}
\sigma^2 + \dfrac{\delta^2}{2} - \dfrac{|\x-\x_0|^2}{2} & |\x-\x_0|<\delta, \\
\sigma^2 & |\x-\x_0|\ge\delta,
\end{cases}
\end{equation}
 The interesting feature of the Fokker--Planck equation \fer{eq:FPlambda1} is related to the fact that, as observed in  \cite{ATZ},   convergence to equilibrium of the solution at a polynomial rate follows by resorting to entropy decay, while the same can not be directly obtained  for the original Fokker--Planck equation \fer{eq:FPlambda1} in reason of the lack of convexity of the potential characterizing the discontinuous drift function. 

Owing to the same idea developed in \cite{ATZ},  for any given pair of positive constants $\lambda,\mu\in [0,1]$, such that $\lambda +\mu=1$,  a Fokker-Planck model with continuous drift can be obtained by solving the following differential equations
\begin{equation}
\label{eq:cond}
\begin{cases}
\mathcal B[f](\x,t) + \nabla_\x \kappa(\x,t) = 0, & |\x-\x_0|\le \delta, \\
\nabla_\x \kappa(\x,t) = 0, & |\x-\x_0|\ge \delta,
\end{cases}
\end{equation}
together with continuity at the interface of the domain $D \subset \mathbb R$. 

Therefore, if a $\kappa(\cdot,\cdot)$ solution to \eqref{eq:cond} exists, we can substitute the action of the swarm, as given by the model \eqref{eq:FP}, with  the Fokker-Planck equation with nonlocal diffusion and linear drift 
\begin{equation}
\label{eq:FP2}
\partial_t f(\x,t) = \nabla_\x \cdot \left[\mathcal B[f](\x,t) f(\x,t) + \nabla_\x (\kappa(\x,t)f(\x,t) )\right].
\end{equation}

At variance with \eqref{eq:FP}-\eqref{eq:drift1}, the  Fokker-Planck equation \fer{eq:FP2} describes a swarm of particles such that each particle senses the direction of motion together with the location of other particles, moves towards the center of the sphere $D$ trying to relax towards the mean position of the swarm, and it starts to randomly explore $D$ by adapting its diffusive behavior  to its distance from the center $\x_0\in D$ of the domain. 
Clearly, even if the steady states of unit mass of the Fokker-Planck models \eqref{eq:FP} and \eqref{eq:FP2} are equal,  the solutions  may differ in the transient regime. 

\begin{remark}
It is important to remark that the variable coefficient of diffusion  $\kappa(\x,t)$ defined in \eqref{eq:FP_simp_diffusion} is positive and uniformly bounded, i.e. $0<c\le \kappa(\x,t)\le C$ where
\[
c = \min_{\x \in \mathbb R^d,t\ge0}\kappa(\x,t) = \sigma^2, \qquad C = \max_{\x \in \mathbb R^d, t\ge0}\kappa(\x,t) = \sigma^2+\dfrac{\delta^2}{2}.
\]
\end{remark}

\subsection{Qualitative properties of the model}\label{sect:21}
Let us suppose that a solution to \eqref{eq:cond} exists. Then, the evolution of the main moments of the distribution $f$ are obtained from the weak formulation of \eqref{eq:FP2} expressed by 
\begin{equation}
\label{eq:weak}
\begin{split}
&\dfrac{d}{dt} \int_{\mathbb R^d} \varphi(\x) f(\x,t)d\x \\
&\quad = \int_{\mathbb R^d} \varphi(\x) \nabla_\x \cdot \left[ \mathcal B[f](\x,t)f(\x,t) + \nabla_\x ( \kappa(\x,t)f(\x,t) ) \right]d\x,
\end{split}
\end{equation}
with $\varphi(\cdot) \in \mathcal C_0$ a smooth test function. By choosing $\varphi(\x) = 1$,  we obtain that the mass of the system of particles is preserved in time. Next,  the choice $\varphi(\x) = \x$ gives the evolution of the mean value  $\mathbf u(t)$, where
\[
\mathbf u(t) = \int_{\mathbb R^d}\x f(\x,t)\, d\x.
\]
Since the interaction function $P(\cdot,\cdot)\ge 0$ is symmetric, we obtain
\[
\begin{split}
\dfrac{d}{dt} \mathbf u(t) &= - \int_{\mathbb R^d} \mathcal B[f](\x,t) f(\x,t)d\x -\int_{\mathbb R^d}  \nabla_\x \cdot(\kappa(\x,t)f(\x,t))d\x, \\
&=  - \lambda (\mathbf u(t)-\x_0) + \mu \underbrace{\int_{\mathbb R^d} \int_{\mathbb R^d} P(\x,\mathbf y)(\x-\mathbf y)f(\mathbf y,t)f(\x,t)d\mathbf y \,d \x}_{=0},
\end{split}\]
Therefore, the evolution of the mean position of the swarm only depends on the constant $\lambda \in [0,1]$  through the formula
\begin{equation}
\label{eq:uevo}
\mathbf u(t) = \x_0+(\mathbf u(0) - \x_0)e^{-\lambda t}.
\end{equation}
Therefore,  $\mathbf u(t) \to \x_0$ for $t\to +\infty$ for all $\lambda\in (0,1]$,  independently on $\mu>0$ and on the form of a symmetric communication function $P\ge0$. Note however that an explicit expression for the mean could not be obtained for the solution to the Fokker--Planck equation \fer{eq:FP} in reason of the discontinuity of the drift coefficient.

Boundedness of energy can be easily obtained by studying the evolution of the second order moment, that is by choosing  $\varphi(\x) = |\x|^2/2$.  If
\[
 E(t) = \int_{\mathbb R^d}\frac{|\x|^2}{2} f(\x,t)\, d\x,
 \]
 we obtain
\[
\begin{split}
\dfrac{d}{dt} \int_{\mathbb R^d}\dfrac{|x|^2}{2}f(\x,t)d\x &= - \int_{\mathbb R^d}\nabla_\x\dfrac{|\x|^2}{2}\cdot \mathcal B[f](\x,t)f(\mathbf y,t)d\mathbf y \\
&\quad + \int_{\mathbb R^d} \Delta_\x \dfrac{|\x|^2}{2} \kappa(\x,t)f(\x,t)d\x.
\end{split}\]
Therefore, since $\kappa$ uniformly bounded from above, we have
\begin{equation}
\label{eq:ener}
\begin{split}
\dfrac{dE(t)}{dt}\le &C-2\lambda E(t) + \lambda \x_0\cdot \mathbf{u}(t)\\
&\quad-\mu\int_{\mathbb R^{2d}}P(\x,\mathbf{y})\x\cdot(\x-\mathbf{y})f(\x,t)f(\mathbf{y},t)d\x d\mathbf{y} \\
\le&C - 2\lambda E(t) + \lambda \x_0\cdot \mathbf{u}(t) + \mu \mathbf{u}^2(t).
\end{split}
\end{equation}
Finally, since $\mathbf{u}(t)$, as given by  \eqref{eq:uevo} is uniformly bounded from above and below, we conclude. 

\subsection{The uniform interaction case}\label{sect:22}
A closer insight on the evolution of the solution to equation \eqref{eq:FP2} can be obtained in the simplified case $P\equiv 1$. As detailed in \cite{ATZ2}, in the case $P \equiv 1$, we get 
\begin{equation}
\label{eq:FP_simp_drift}
\mathcal B[f](\x,t) = \x -\tilde \x_0(t), \qquad \tilde \x_0(t) = \lambda \x_0 + \mu \mathbf u(t).
\end{equation}
 Hence, the action of the swarm described by the Fokker--Planck equation \eqref{eq:FP} may be fruitfully recast by resorting to the Fokker--Planck equation \eqref{eq:FP2} with a variable diffusion function of the form
\begin{equation}
\label{eq:FP_simp_diffusion}
\kappa(\x-\tilde{\x}_0(t))=\kappa(\x,t) = 
\begin{cases}
\sigma^2 + \dfrac{\delta^2}{2} - \dfrac{|\x-\tilde \x_0(t)|^2}{2} & |\x-\tilde \x_0(t)|<\delta, \\
\sigma^2 & |\x-\tilde \x_0(t)|\ge \delta. 
\end{cases}
\end{equation}
It is worth to remark that, since the evolution of the mean position $\mathbf{u}(t)$ is given by \eqref{eq:uevo},  in the limit $t \rightarrow +\infty$ the diffusion coefficient $\kappa(\x,t)$ in \eqref{eq:FP_simp_diffusion} converges uniformly to the diffusion coefficient $K(\x)$ defined  by \eqref{eq:kappa_lambda1}.  

 Having this property in mind, it is immediate to realize that, to recover results about existence, uniqueness and positivity of solutions to  the Fokker-Planck equation with uniform interactions 
\begin{equation}
\label{eq:FP_P1}
\partial_t f(\x,t) = \nabla_\x \cdot \left[(\x-\tilde{\x}_0)f(\x,t) + \nabla_\x(\kappa(\x,t)f(\x,t)) \right]
\end{equation}
with $\kappa(\x,t)$ defined in \eqref{eq:FP_simp_diffusion},   we can resort to an alternative equivalent formulation, which consists in  rewriting the equation in terms of $\mathbf{z} = \x - \tilde{\x}_0(t)\in \mathbb R$. Therefore, if $g(\z,t) = f(\z + \tilde{\x}_0(t),t)$, $g$ satisfies the equation
\begin{equation}
\label{eq:g}
\partial_t g(\z,t) + \mathbf{A}(t)\cdot\nabla_\z g(\z,t) = \nabla_\z \cdot \left[ \z g(\z,t) + \nabla_\z(K(\z)g(\z,t)) \right],
\end{equation}
where $\mathbf{A}(t) = \lambda\mu (\mathbf{u}(0)-\x_0)e^{-\lambda t}$ and $K$ has been defined in \eqref{eq:kappa_lambda1}. The above equation is composed of a pure transport operator and a drift-diffusion operator with time independent coefficients, and where $K\ge \sigma^2>0$  is uniformly bounded. 

As shown in \cite{ATZ2} in the one-dimensional case, existence and uniqueness of the solution to \fer{eq:FP_P1} in $\R^d$ can be obtained by resorting to Proposition 2 of Section 6 of the paper by Le Bris and Lions \cite{LLB}, 
concerned with  Fokker-Planck type equations with irregular and time-dependent drift and diffusion coefficients. The following result holds, see \cite{ATZ2,LLB}. 
\begin{theorem}
\label{th:1}
We consider the initial value problem 
\begin{equation}
\label{eq:FPth}
\partial_t f(\x,t) = \nabla_\x \cdot\left[\mathcal{B}[f](\x,t)f(\x,t) + \nabla_\x(\kappa(\x,t)f(\x,t)) \right],
\end{equation}
for each initial condition $f(\x,0) = f_0(\x) \in L^1 \cap L^\infty(\mathbb R^d)$ (resp. $L^2 \cap L^\infty(\mathbb R^d)$) and 
\[
\dfrac{\mathcal B[f]}{1+|\x|}\in L^1+L^\infty(\mathbb R^d), \qquad \dfrac{\kappa(\x,t)}{1+|\x|} \in L^2 \cap L^\infty(\mathbb R^d).
\]
If for any $t\ge 0$ that $\nabla_\x \cdot \mathcal B[f] \in L^\infty(\mathbb R^d)$, $\mathcal B[f] \in W^{1,1}_{\textrm{loc}}(\mathbb R^d)$, $\kappa(\x,t) \in W^{1,2}_{\textrm{loc}}(\mathbb R^d)$, then equation \eqref{eq:FPth} has a unique solution in the space
\[
\begin{split}
&f(\x,t) \in L^\infty([0,T],L^1\cap L^\infty(\mathbb R^d) \textrm{(resp. $L^2 \cap L^\infty(\mathbb R^d))$},\\
&\kappa(\x,t)\nabla_\x f(\x,t) \in L^2([0,T],L^2(\mathbb R^{d})).
\end{split}
\]
\end{theorem}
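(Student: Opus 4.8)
The plan is to recognise the initial value problem \eqref{eq:FPth} as a second order parabolic equation in divergence form with irregular, time-dependent coefficients, and to verify that it meets precisely the hypotheses of Proposition~2 in Section~6 of Le Bris and Lions \cite{LLB}; the conclusion of that result then yields directly the asserted existence, uniqueness and regularity. Expanding the divergence, the equation reads
\[
\partial_t f = \nabla_\x\cdot\big(a(\x,t)\nabla_\x f + b(\x,t)\,f\big),
\]
from which one reads off the diffusion matrix $a(\x,t)=\kappa(\x,t)\,\mathrm{Id}$ and the effective transport field $b(\x,t)=\mathcal B[f](\x,t)+\nabla_\x\kappa(\x,t)$. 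In this way the abstract coefficients of \cite{LLB} are identified with the data of the model, and what remains is a verification that every structural assumption required there is satisfied by the pair $(a,b)$.

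First I would establish uniform parabolicity. By the Remark following \eqref{eq:FP2} one has $0<\sigma^2\le\kappa(\x,t)\le\sigma^2+\delta^2/2$ uniformly in $(\x,t)$, so $a$ is uniformly elliptic and bounded; since $\kappa$ is globally Lipschitz in $\x$ --- its gradient being discontinuous only across the smooth hypersurface $|\x-\x_0|=\delta$, a set of measure zero --- one gets $\nabla_\x\kappa\in L^\infty\subset L^2_{\mathrm{loc}}$, hence $\kappa\in W^{1,2}_{\mathrm{loc}}(\R^d)$, while $\kappa/(1+|\x|)\in L^2\cap L^\infty$ follows from the boundedness of $\kappa$. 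Next I would check the drift assumptions listed in the statement: the growth bound $\mathcal B[f]/(1+|\x|)\in L^1+L^\infty$, the local Sobolev regularity $\mathcal B[f]\in W^{1,1}_{\mathrm{loc}}$, and the bounded-divergence condition $\nabla_\x\cdot\mathcal B[f]\in L^\infty$. These are exactly the integrability and Sobolev/BV requirements under which the renormalisation theory underlying \cite{LLB} produces a well defined solution; the discontinuity of $\mathcal B[f]$ across $|\x-\x_0|=\delta$ is harmless, because the jump is bounded and contributes only a bounded surface term to the distributional divergence, preserving $\nabla_\x\cdot\mathcal B[f]\in L^\infty$.

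Once these verifications are in place, Proposition~2 of \cite{LLB} applies and delivers a unique solution in $L^\infty([0,T],L^1\cap L^\infty(\R^d))$ --- respectively $L^\infty([0,T],L^2\cap L^\infty(\R^d))$ for the $L^2$ data class --- together with the energy bound $\kappa\,\nabla_\x f\in L^2([0,T],L^2(\R^d))$ coming from the standard parabolic a priori estimate and the two-sided control of $\kappa$. The step I expect to be the main obstacle is the nonlocal, self-referential nature of the drift: through the interaction integral in \eqref{eq:drift1} the coefficient $\mathcal B[f]$ depends on the unknown $f$ itself, so \eqref{eq:FPth} is genuinely of McKean--Vlasov type rather than linear, and the cited theory is stated for prescribed coefficients. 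To close the argument I would either invoke the reduction available in the regimes of interest --- for $P\equiv1$ the drift collapses to the explicit affine field \eqref{eq:FP_simp_drift} with the prescribed mean \eqref{eq:uevo}, so that the coefficients become given functions of $(\x,t)$ and \cite{LLB} applies to a bona fide linear problem --- or, in the general symmetric case, run a fixed-point scheme: feed a candidate density into $\mathcal B[\cdot]$, solve the resulting linear equation through \cite{LLB}, and show the induced map is a contraction on a suitable ball, exploiting the uniform bounds on $\mathbf u(t)$ and the boundedness of $P$ to control the Lipschitz dependence of the coefficients on $f$.
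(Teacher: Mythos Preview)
Your proposal is correct and follows exactly the paper's route: the proof there is essentially a one-line citation of Proposition~2 in Section~6 of \cite{LLB}, and you have simply spelled out the verification of its hypotheses in more detail than the authors do. Your additional concern about the McKean--Vlasov self-consistency of $\mathcal B[f]$ goes beyond what the theorem actually asserts---the statement treats $\mathcal B[f]$ as a prescribed coefficient satisfying the listed bounds---and the paper addresses the applicability to the concrete model only afterwards, in the Remark for $P\equiv 1$, much as you suggest.
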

\begin{proof}
We point the interested reader to the proof of Proposition 2 in Section 6 of \cite{LLB}. Concerning the finiteness of \eqref{eq:cond_f0} we may observe that the solution to \eqref{eq:FPth}, as shown in Section \ref{sect:21}, inequality \eqref{eq:ener}, has bounded second order moment. Hence, we may argue as in \cite{Arkeryd} to conclude that $f|\log^-f|\le e^{-|\x|^2} + |\x|^2 f \in L^1(\mathbb R^d)$. Since the solution $f(\x,t) \in L^\infty(\mathbb R^d)$, it follows that $f(\x,t) \log^+f(\x,t) \in L^1(\mathbb R^d)$ 
\end{proof}

\begin{remark}
We observe that in the case $P\equiv 1$ we get $\mathcal B[f](\x,t) = \x-\tilde{\x}_0(t)$ and a diffusion function of the form \eqref{eq:FP_simp_diffusion}. Therefore, the above result holds for all time $t \in [0,T]$ being
\[
\dfrac{\x-\tilde{\x}_0}{1+|\x|} = \dfrac{1}{(1+|\x|)^{d+1}} + \dfrac{(\x-\tilde{\x}_0)(1+|\x|)^d-1}{(1+|\x|)^{d+1}} \in L^1([0,T], L^1 + L^\infty(\mathbb R^d)),
\]
and $\kappa(\x,t)$ uniformly bounded. 
\end{remark}

Moreover we have 
\begin{corollary}\label{cor:1}
Under the same hypotheses of Theorem \ref{th:1}, if 
 $f_0(\x)$ is a probability density function such that 
\begin{equation}
\label{eq:cond_f0}
\int_{\mathbb R^d} (1+|\x|^2 + \log f_0(\x))f_0(\x)d\x <+\infty,
\end{equation}
then, for all $t \in [0,T]$, the unique solution $f(\x,t)$ to the Fokker--Planck equation \fer{eq:FPth} is a probability density function, and
\[
\int_{\mathbb R^d} (1+|\x|^2 + \log f(\x,t))f(\x,t)d\x <+\infty.
\]
\end{corollary}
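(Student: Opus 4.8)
The statement bundles three properties that must be \emph{propagated} from $f_0$ to $f(\cdot,t)$: that $f(\cdot,t)$ remains a probability density, that its second moment stays finite, and that its entropy stays finite on $[0,T]$. Since Theorem~\ref{th:1} already provides a unique solution in $L^\infty([0,T],L^1\cap L^\infty(\R^d))$, the work is one of propagation, not of construction. Mass conservation is immediate from the weak formulation \eqref{eq:weak} with $\varphi\equiv 1$: the integrand is a pure divergence, so $\int_{\R^d}f(\x,t)\,d\x=\int_{\R^d}f_0(\x)\,d\x=1$ for every $t$. Nonnegativity is the sign-preservation property built into the renormalised-solution theory behind Proposition~2 of \cite{LLB} (equivalently obtainable by a parabolic maximum principle on a regularised problem), so that $f_0\ge 0$ forces $f(\cdot,t)\ge 0$; together these give that $f(\cdot,t)$ is a probability density for all $t\in[0,T]$.

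To control the second moment I would insert $\varphi(\x)=\tfrac12|\x|^2$ into \eqref{eq:weak}, which yields exactly the differential inequality \eqref{eq:ener} for $E(t)=\int_{\R^d}\tfrac12|\x|^2 f(\x,t)\,d\x$. Its right-hand side is bounded above by $\bar b - 2\lambda E(t)$, where $\bar b$ collects the constant $C$ coming from the uniformly bounded diffusion $\kappa$ together with the terms $\lambda\,\x_0\cdot\mathbf{u}(t)+\mu\,\mathbf{u}^2(t)$, all finite because $\mathbf{u}(t)$ is explicit and bounded on $[0,T]$ by \eqref{eq:uevo}. A Gr\"onwall argument then bounds $E(t)$ on $[0,T]$ in terms of $E(0)$ and $\bar b$, and $E(0)<\infty$ is guaranteed by hypothesis \eqref{eq:cond_f0}. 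This delivers $\int_{\R^d}|\x|^2 f(\x,t)\,d\x<\infty$ uniformly on $[0,T]$.

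For the entropy I would split $\log f=\log^+ f-\log^- f$ and treat the two parts separately. The positive part is controlled by the $L^\infty$ bound of Theorem~\ref{th:1}: since $f(\cdot,t)\le \|f(\cdot,t)\|_{L^\infty}$, one has $\log^+ f\le \log^+\|f(\cdot,t)\|_{L^\infty}$ pointwise, so $\int_{\R^d}f\log^+ f\,d\x\le \log^+\|f(\cdot,t)\|_{L^\infty}<\infty$. For the negative part I would use Young's inequality for $s\mapsto s\log s$, whose convex conjugate is $e^{r-1}$, giving $-s\log s\le -rs+e^{r-1}$; the choice $r=-|\x|^2$ produces the pointwise bound $f\,|\log^- f|\le |\x|^2 f+e^{-|\x|^2-1}$. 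Both terms are integrable — the first by the second-moment bound just established, the second because it is a Gaussian — so $\int_{\R^d}f\log^- f\,d\x<\infty$. This is precisely the Arkeryd-type estimate already invoked in the proof of Theorem~\ref{th:1}, and combining the two parts closes the entropy bound.

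I expect the only genuinely delicate point to be the rigorous use of the moment computation, since $\varphi(\x)=\tfrac12|\x|^2$ is neither bounded nor compactly supported and so is not an admissible test function in \eqref{eq:weak}. The remedy is standard: run the estimate with a truncated weight $\varphi_R(\x)=\tfrac12|\x|^2\eta(\x/R)$ for a smooth cutoff $\eta$ equal to one on the unit ball, derive the corresponding inequality, and let $R\to\infty$, using finite mass together with the a priori $L^\infty$ and finite-second-moment controls and the boundedness of $\kappa$ and $\mathbf{u}(t)$ to show that the truncation error vanishes. Once this limiting procedure is justified, nonnegativity, unit mass, the second-moment bound and the entropy bound assemble into the claim for every $t\in[0,T]$.
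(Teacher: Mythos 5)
Your proposal is correct in substance, and on two of the three ingredients it coincides with the paper's own proof: the second-moment bound obtained by inserting $\varphi(\x)=|\x|^2/2$ into \eqref{eq:weak} and invoking inequality \eqref{eq:ener}, and the entropy bound via the Arkeryd-type estimate $f\,|\log^- f|\le |\x|^2 f + e^{-|\x|^2}$ combined with the $L^\infty$ control of $f\log^+ f$ — this last part is essentially word for word what the paper does. Where you genuinely diverge is positivity. The paper does not extract sign preservation from \cite{LLB}; instead it passes to the translated equation \eqref{eq:g} (available because the argument is run in the uniform-interaction case $P\equiv 1$), discretizes it by operator splitting into the drift--diffusion step \eqref{eq:FP-step} and the transport step \eqref{eq:trans-step}, notes that each elementary step preserves nonnegativity, and concludes $g(\z,t)=\lim_{n} g_n(\z,t)\ge 0$ via Trotter's formula together with the uniqueness already established. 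Your route — nonnegativity inherited from the regularized problems behind Proposition 2 of \cite{LLB}, or a parabolic maximum principle — is plausible and standard, but it is asserted rather than derived, and it is exactly the point the authors evidently did not find ready-made in \cite{LLB}, otherwise their splitting construction would be superfluous; if you take this route you must actually verify that the Le Bris--Lions solution is the limit of sign-preserving approximations. What your approach buys is generality (you never use the translation trick, so nothing is tied to $P\equiv 1$, whereas the paper's proof silently specializes) and more rigor on the unbounded test function: your cutoff-and-limit justification of the moment computation addresses a step the paper performs without comment. What the paper's approach buys is a self-contained constructive positivity proof whose splitting scheme does double duty: the same decomposition is reused in Section \ref{sect:3} to obtain the weak maximum principle needed for the entropy-decay argument.
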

\begin{proof}
In view of the linearity of equation \fer{eq:g},  properties of the solution to equation \fer{eq:FPth} can  be shown by resorting to its equivalent formulation \fer{eq:g}, discretizing this equation via the classical splitting method  \cite{temam}, and subsequently applying to this discretization of the solution the classical Trotter's formula. 

Let us briefly recall the splitting method.  For any given time $T >0$ and $n \in \mathbb N$, we introduce a time discretization $t^k = k\Delta t$, $k \in[0,n]$, with $\Delta t = T/n>0$. Then we proceed by solving two separate problems in each time step as follows:
\begin{enumerate}
\item At time $t = 0$ we start from $g^0(\z) = g(\z,0)\ge 0$, $g^0 \in H^1(\mathbb R)$. 
\item For $t\in[t^k,t^{k+1}]$ we solve the Fokker-Planck step
\be\label{eq:FP-step}
\begin{split} 
&\partial_t g(\z,t) = \nabla_\z \cdot \left[ \z g(\z,t) + \nabla_\z(K(\z)g(\z,t)) \right],\\
& g(\z,t^k) = g^k(\z) 
\end{split}
\ee
\item The solution of the Fokker--Planck step at time $t^{k+1}$ is assumed as the initial value for the transport step in the same time interval $t\in[t^k,t^{k+1}]$.  This is  usually done by denoting $g(\z, t^{k+1} )= g^{k+1/2}(\z)$  
\item
For $t \in [t^{k},t^{k+1}]$ the transport step is subsequently solved by considering 
\be\label{eq:trans-step}
\begin{split}
&\partial_t g(\z,t) +\mathbf{A}(t)\cdot \nabla_\z g(\z,t)=0,\\
&g(\z, t^k) = g^{k+1/2}(\z). 
\end{split}
\ee
\end{enumerate}
The method clearly generates an approximation to the solution of problem \fer{eq:g}, say $g_n(\x,t)$, for which properties can be easily derived by resorting to well-known properties of the underlying linear operators, in our case transport and drift--diffusion, which are solved in sequence. For example, positivity is immediate to derive in view of the positivity properties of both the operators involved into the splitting. Next, existence and uniqueness of the solution to \fer{eq:g} allow to conclude, via Trotter's formula, that 
\[
\lim_{n\to \infty} g_n(\x,t) = g(\x,t) \ge 0,
\]
where $g(\x,t)$ is the solution to \fer{eq:g}. This shows positivity. 

Concerning the finiteness of \eqref{eq:cond_f0} we may observe that the solution to \eqref{eq:FPth}, as shown in Section \ref{sect:21}, inequality \eqref{eq:ener}, has bounded second order moment. Hence, we may argue as in \cite{Arkeryd} to conclude that $f(\x,t)|\log^-f(\x,t) |\le e^{-|\x|^2} + |\x|^2 f(x,t) \in L^1(\mathbb R^d)$. Since the solution $f(\x,t) \in L^\infty(\mathbb R^d)$, it follows that $f(\x,t) \log^+f(\x,t) \in L^1(\mathbb R^d)$ .
\end{proof}

\section{Large time behavior via entropy decay}\label{sect:3}
In this section we focus on the convergence of the solution to the Fokker--Planck equation \eqref{eq:FP2} to its equilibrium distribution, in the  uniform interaction case $P\equiv 1$. The study of convergence towards equilibrium of classical kinetic equations is a well-known problem, which is classically based on the study of the time-decay of entropy functionals.  We point the interested reader to \cite{OV,T1,TV} for an introduction. 

Once established existence, uniqueness and positivity of the solution to \eqref{eq:FP2} in \cite{ATZ2} the following one-dimensional result  was obtained

\begin{theorem}\label{th:2}
Let $f(x,t)$, $x \in \mathbb R$, be the unique solution to the initial value problem \eqref{eq:FPth} departing from an initial condition $f_0(x)$ such that the hypotheses in Corollary \ref{cor:1} are satisfied. Then $f(x,t)$ converges in $L^1(\mathbb R)$ towards the one-dimensional steady solution $f_\infty(x)$ defined in \eqref{eq:finfty_1} and the convergence rate is at least $o(t^{-1/2})$.
\end{theorem}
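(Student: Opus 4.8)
The plan is to run the relative--entropy method on the shifted equation \fer{eq:g}, where the Fokker--Planck operator is autonomous with stationary profile $g_\infty(z)=f_\infty(z+x_0)$ (Gaussian for $|z|\ge\delta$, flat for $|z|<\delta$) and the only time dependence is carried by the transport term $\mathbf A(t)\,\partial_z g$, whose amplitude $\abs{\mathbf A(t)}=\abs{\lambda\mu(\mathbf u(0)-x_0)}e^{-\lambda t}$ decays exponentially. Writing $h=g/g_\infty$ in the one--dimensional setting of the statement, I would introduce the relative entropy
\[
H(t)=\int_{\R} g(z,t)\log\frac{g(z,t)}{g_\infty(z)}\,dz,
\]
which is finite for every $t$ by the logarithmic and moment bounds of Corollary \ref{cor:1}, and whose control of the $L^1$ distance through Csisz\'ar--Kullback--Pinsker, $\|g(\cdot,t)-g_\infty\|_{L^1(\R)}^2\le 2H(t)$, is the quantity I ultimately want to estimate.

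Next I would compute the entropy production. Using the stationarity identity $z\,g_\infty+\partial_z(K g_\infty)=0$ coming from \fer{eq:kappa_lambda1}, the diffusive flux rewrites as $K g_\infty\,\partial_z h$, so the drift--diffusion part of \fer{eq:g} contributes the nonpositive term
\[
-I(t)=-\int_{\R} K(z)\,g_\infty(z)\,\frac{\abs{\partial_z h}^2}{h}\,dz=-4\int_{\R} K(z)\,g_\infty(z)\,\abs{\partial_z\sqrt h}^2\,dz\le 0,
\]
while the transport term yields a remainder $R(t)=-\mathbf A(t)\int_{\R} g\,\partial_z\log g_\infty\,dz$. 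Since $\partial_z\log g_\infty$ equals $-z/\sigma^2$ for $|z|\ge\delta$ and vanishes on $D$, one has $\abs{R(t)}\le \sigma^{-2}\abs{\mathbf A(t)}\int_{\R}\abs{z}\,g\,dz\le Ce^{-\lambda t}$ by the first--moment bound implied by \fer{eq:ener}. Hence $\tfrac{d}{dt}H\le -I(t)+Ce^{-\lambda t}$, and integrating in time gives the finite total dissipation $\int_0^\infty I(s)\,ds<\infty$. The boundary contributions generated when integrating by parts across $\partial D$, where $\partial_z\log g_\infty$ jumps, must be shown to cancel; this is handled through the continuity of $g_\infty$ at $|z|=\delta$ and the weak formulation \fer{eq:weak}.

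The delicate, and I expect hardest, step is converting $\int_0^\infty I<\infty$ into a genuine rate. Because $g_\infty$ is flat on $D$ its potential is convex but not uniformly convex, so no logarithmic Sobolev inequality pairs with $H$ and exponential \emph{entropy} decay is out of reach; this is exactly why only an algebraic rate is obtained. The measure $g_\infty$ is nonetheless log--concave and hence admits a Poincar\'e inequality which, since $K\ge\sigma^2$, gives $\mathrm{Var}_{g_\infty}(\sqrt h)\le C_P\int_{\R} K g_\infty\abs{\partial_z\sqrt h}^2\,dz=\tfrac{C_P}{4}I(t)$. Rewriting the variance as a squared Hellinger distance and using $\|g-g_\infty\|_{L^1}\le 2\big(\int_{\R} g_\infty(\sqrt h-1)^2\,dz\big)^{1/2}$, I would deduce $\|g(\cdot,t)-g_\infty\|_{L^1(\R)}^2\le C\,I(t)$, and therefore $\int_0^\infty\|g(\cdot,t)-g_\infty\|_{L^1(\R)}^2\,dt<\infty$.

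Finally I would exploit monotonicity. The Fokker--Planck semigroup is an $L^1$ contraction and $g_\infty$ is one of its stationary states, so the drift--diffusion part does not increase $\|g-g_\infty\|_{L^1}$, while the transport part changes it by at most $\abs{\mathbf A(t)}\int_{\R}\abs{\partial_z g_\infty}\,dz=O(e^{-\lambda t})$; thus $t\mapsto\|g(\cdot,t)-g_\infty\|_{L^1(\R)}$ is non-increasing up to an $O(e^{-\lambda t})$ correction. For a non-increasing, square--integrable $\phi$ one has $\tfrac{t}{2}\,\phi(t)^2\le\int_{t/2}^t\phi(s)^2\,ds\to 0$, so $t\,\|g(\cdot,t)-g_\infty\|_{L^1(\R)}^2\to 0$. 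Since $z=x-\tilde x_0(t)$ is a translation and $\tilde x_0(t)\to x_0$ exponentially fast, $\|f(\cdot,t)-f_\infty\|_{L^1(\R)}\le\|g(\cdot,t)-g_\infty\|_{L^1(\R)}+O(e^{-\lambda t})$, and the same decay transfers to $f$, giving $\|f(\cdot,t)-f_\infty\|_{L^1(\R)}=o(t^{-1/2})$, as claimed.
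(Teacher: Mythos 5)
Your argument is correct, and at the structural level it is the same entropy method the paper (following \cite{ATZ2}) sketches below Theorem \ref{th:2}: pass to the shifted equation \fer{eq:g}, prove time-decay of the relative entropy up to an exponentially small remainder produced by the transport term $\mathbf{A}(t)\cdot\nabla_\z g$, convert entropy production into control of the $L^1$ distance through a one-dimensional functional inequality, and upgrade time-integrability of $\|g-g_\infty\|_{L^1}^2$ into a pointwise $o(t^{-1/2})$ rate via monotonicity of the distance. The genuine difference is the key lemma: the paper invokes the specific one-dimensional Wirtinger-type inequality of \cite{FPTT}, and explicitly attributes to the absence of a multi-dimensional version its inability to extend the theorem to $d\ge 2$; you instead use the Poincar\'e inequality for log-concave measures (the potential of $g_\infty$, namely $\max(|z|^2,\delta^2)/(2\sigma^2)$ up to an additive constant, is convex) applied to $\sqrt h$, combined with the elementary bounds $\mathrm{Var}_{g_\infty}(\sqrt h)=1-\bigl(\int\sqrt{g\,g_\infty}\,dz\bigr)^2\ge\frac12\int\bigl(\sqrt g-\sqrt{g_\infty}\bigr)^2dz$ and $\|g-g_\infty\|_{L^1}\le 2\bigl(\int(\sqrt g-\sqrt{g_\infty})^2dz\bigr)^{1/2}$. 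Since log-concave probability measures satisfy a Poincar\'e inequality in every dimension, and all your other steps (the flux identity $zg_\infty+\partial_z(Kg_\infty)=0$, the transport remainder, the Kato-type contraction estimate for the $L^1$ distance, the calculus lemma) are dimension-free, your route in fact proves more than the one-dimensional statement and removes the obstruction the paper points to; this is worth stating explicitly. Two small repairs: the Poincar\'e step should carry the constant coming from $K\ge\sigma^2$, i.e. $\mathrm{Var}_{g_\infty}(\sqrt h)\le C_P\int g_\infty|\partial_z\sqrt h|^2dz\le \frac{C_P}{4\sigma^2}I(t)$, not $\frac{C_P}{4}I(t)$; and the final lemma should be applied to $\phi(t)=\|g(\cdot,t)-g_\infty\|_{L^1}+\frac{C}{\lambda}e^{-\lambda t}$, which is honestly non-increasing and square-integrable, rather than to the distance itself, which is only ``non-increasing up to a correction''. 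Like the paper, you take for granted the regularity needed to differentiate $H$ and integrate by parts across $|z|=\delta$ (the paper gets this from the weak maximum principle and Theorem 7 of \cite{FPTT}); the continuity of the flux $zg_\infty+\partial_z(Kg_\infty)$ at the interface, which you note, is indeed what makes the boundary terms cancel.
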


The proof of Theorem \ref{th:2} has been classically based on the rigorous time-decay of the relative entropy, and on a one-dimensional  Wirtinger type inequality proved in \cite{FPTT}.  Since a multi-dimensional version of this inequality is not available, the extension of the result in higher dimensions can not be directly concluded. 

However, various partial results can be easily shown to hold.
As in the one-dimensional case,  the Fokker-Planck equation \eqref{eq:FP2}  possesses a quasi-stationary solution, namely a solution, for a fixed time $t>0$, of the first-order differential equation
\[
\left[(\x-\tilde{\mathbf{x}}_0(t))+ \nabla_\x \kappa(\x,t)\right]f(\x,t)  + \kappa(\x,t)\nabla_\x f(\x,t)=0
\]
This solution, in the time-independent case $\mu =0$, coincides with \eqref{eq:finfty_1}, whereas, for $\mu>0$, it is given by
\begin{equation}
\label{eq:fq}
f_q(\x,t) = \begin{cases}
\dfrac{m_1}{(2\pi \sigma^2)^{d/2}} \exp\left\{-\dfrac{|\x-\tilde{\x}_0(t)|^2}{2\sigma^2}\right\} & |\x-\tilde{\x}_0(t)|\ge \delta,\\
m_2 \left(\dfrac{\delta^d \pi^{d/2}}{\Gamma(d/2 + 1)} \right)^{-1} & |\x-\tilde{\x}_0(t)|< \delta, 
\end{cases}
\end{equation}
where $\tilde{\mathbf{\x}}_0(t) =\lambda \x_0 + \mu \mathbf{u}(t)$.

Also, the time-decay (without rate) of the relative entropy between the solution  to the Fokker-Planck equation \eqref{eq:FP2} and its quasi-stationary solution \fer{eq:fq} can be rigorously proven since the solution to \fer{eq:FP2} satisfies a weak maximum principle in any bounded domain of $\R^d$, and Theorem $7$ of \cite{FPTT} can be applied. 

The weak maximum principle can be obtained by resorting to the equivalent formulation \fer{eq:g}, along the line of the proof of Corollary \ref{cor:1}.

Since the  equilibrium state
\begin{equation}
\label{eq:finfty_g}
g^\infty(\x) = 
\begin{cases}
\dfrac{m_1}{(2\pi \sigma^2)^{d/2}} \exp\left\{-\dfrac{|\z|^2}{2\sigma^2}\right\} & |\z|\ge \delta,\\
m_2 \left(\dfrac{\delta^d \pi^{d/2}}{\Gamma(d/2 + 1)} \right)^{-1} & |\z|< \delta.
\end{cases}
\end{equation}
is a steady state of the drift--diffusion step \fer{eq:FP-step}, we can use, in any fixed bounded domain with suitably boundary conditions,  its adjoint form
\be\label{eq:FP-stepA}
\begin{split} 
&\partial_t G(\z,t) =  K(\z)\Delta_\z G(\z,t) -  \nabla_\z \cdot  \z G(\z,t) ,\\
& G(\z,t^k) = G^k(\z) 
\end{split}
\ee
where $G(\z,t) = g(\z,t)/g^\infty(\z)$. Likewise, in the transport step, it is immediate to conclude that, in view of the expression of $g^\infty(\z)$, we can obtain the evolution equation
\be\label{eq:FP-stepB}
\begin{split} 
&\partial_t G(\z,t) +\mathbf{A}(t)\cdot\left[ \nabla_\z G(\z,t) - G(\z,t) \mathds{1}(|\z|\ge \delta) \frac\z\sigma  \right]=0,\\
&G(\z, t^k) = G^{k+1/2}(\z)
\end{split}
\ee
Now,  the uniform boundedness of the diffusion coefficient $K(\z)$ ensures that the solution to the step \fer{eq:FP-stepA} satisfies  the maximum principle.  Likewise, integrating equation \fer{eq:FP-stepB} along characteristics shows that its solution satisfies, in any fixed bounded domain, a weak maximum principle, in the sense that,  if the initial value is such that $0< c \le G^{k+1/2}(\z) \le C$, the solution at time $t >0$ satisfies $0< c(t) \le G^{k+1/2}(\z) \le C(t)$, where $c(t)$ is a positive constant and $C(t)$ is a bounded constant.

The previous computations ensure, following the line of Theorem \ref{th:2}, that the unique nonnegative solution to the Fokker--Planck equation converge to the steady--state distribution \fer{eq:finfty_g}, without any explicit rate. However, as we will show in the next Section, numerical test in the two-dimensional case suggest that a polynomial rate of convergence could be found.

\section{Numerical tests}\label{sect:4}

In this Section we perform several numerical tests on the behavior of the solutions to the introduced models. First we check the consistency of the  Fokker-Planck equations with suitable microscopic particles' systems composed by a large number of particles, in the presence of interaction forces. Furthermore, we numerically investigate the 2D case where, as specified in the previous Section, rigorous results on the trends to equilibrium are not present. In all the tests the evolution of Fokker-Planck models have been obtained with the structure-preserving schemes defined  in \cite{PZ}, see also \cite{LZ} for related results. 

\subsection{Convergence of the particles' system}

In this test we design a particle system a system composed $N\gg 0$ particles evolving through stochastic differential equations (SDEs) describing the position $\x_i(t) \in \mathbb R^{d}$, $i = 1,\dots,N$ of the agents. In particular, we consider the system of equations given by 
\begin{equation}
\label{eq:part1}
\begin{split}
d\x_i(t) = &\left(\lambda(\x_i-\x_0) + \dfrac{\mu}{N}\sum_{j=1}^N P(\x_i,\x_j)(\x_j-\x_i) \right ) \mathds{1}_{D^c}(\x_i)dt \\
&+ \sqrt{2\sigma^2}d\mathbf W_i^t,\qquad \lambda,\mu>0
\end{split}
\end{equation}
with $\lambda + \mu = 1$ and where we denoted by $\left\{\mathbf W_i^t\right\}_{i=1}^N$ a vector of $N$ independent $d$-dimensional Wiener processes, $P(\cdot,\cdot) \in [0,1]$ and $\mathds{1}_{D^c}(\x_i)$ is the indicator function of the complement of $D \subset \mathbb R^d$. 

Since the drift of the particles' system is discontinuous, the transition to chaos of the particle system \eqref{eq:part1}  cannot be obtained through standard results based on drifts generated by globally Lipschitz interactions \cite{M} or based on convexity arguments \cite{BGM}. 

\subsubsection{Test 1a: Uniform interaction case}
In the following we compare the evolution of the reconstructed distribution $f^{N}_1(x,t)$ of the particles' system $\{x_i(t)\}_{i=1}^N$ solution to  \eqref{eq:part1} with the distribution solution of the Fokker-Planck model \eqref{eq:FP}. The reconstruction $f^{N}_1(x,t)$ has been obtained through a standard histogram. On the other hand, we will indicate with $f_1(\x,t)$ the solution at time $t\ge0$ of the model \eqref{eq:FP}. 

\begin{figure}
\centering
\includegraphics[scale = 0.30]{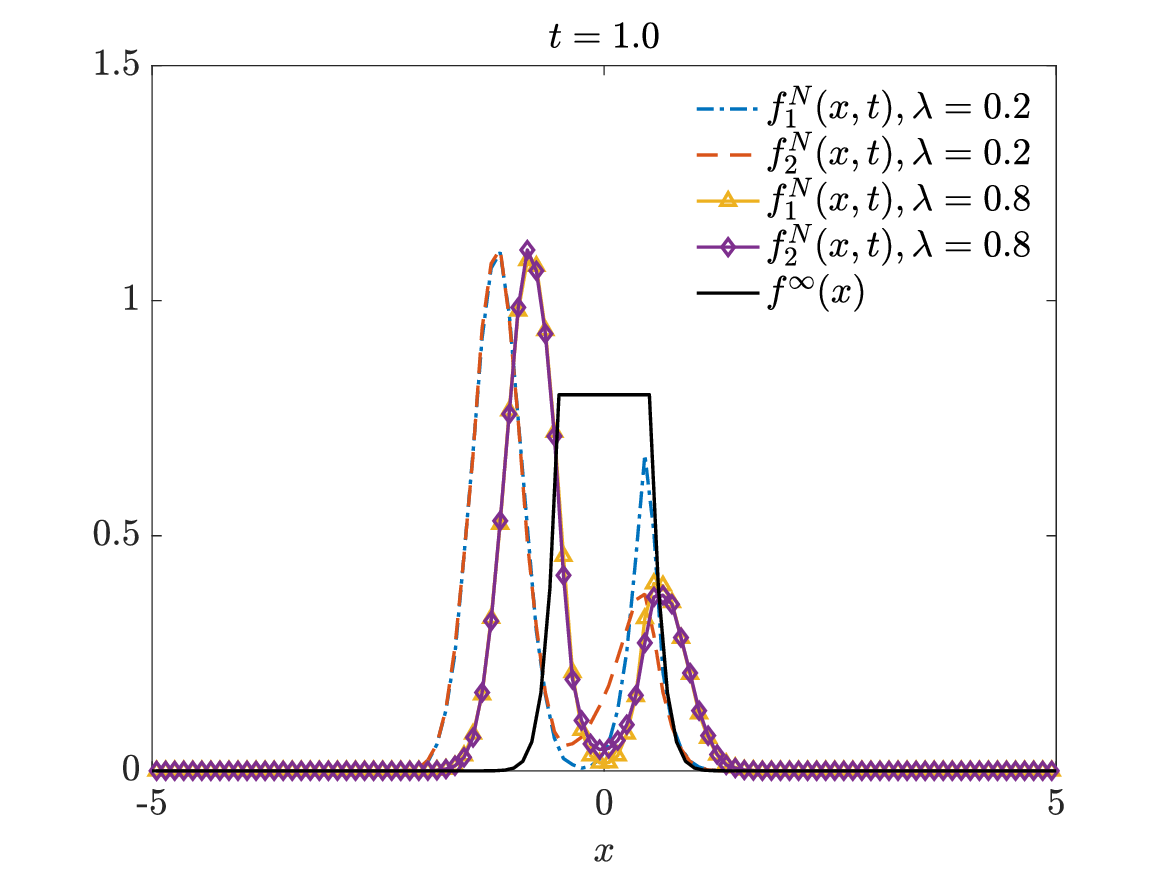}
\includegraphics[scale = 0.30]{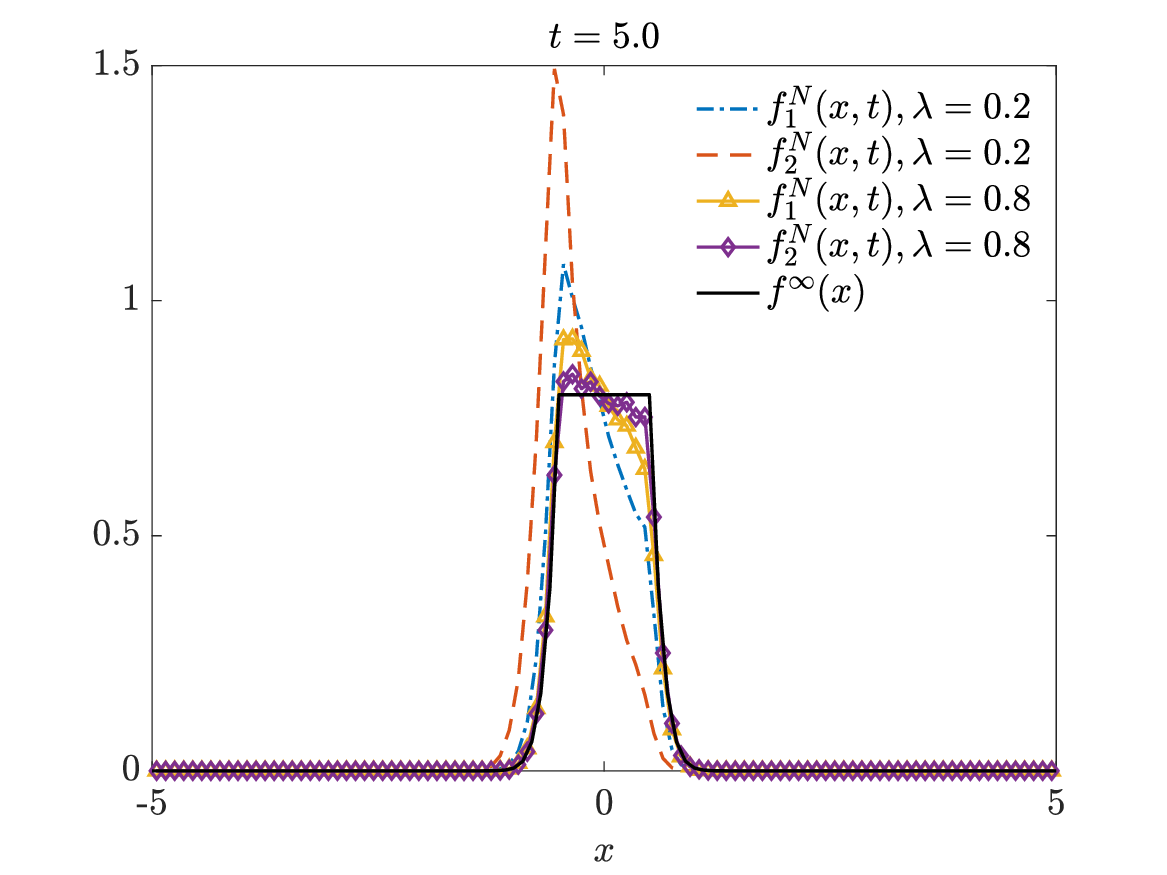}
\includegraphics[scale = 0.30]{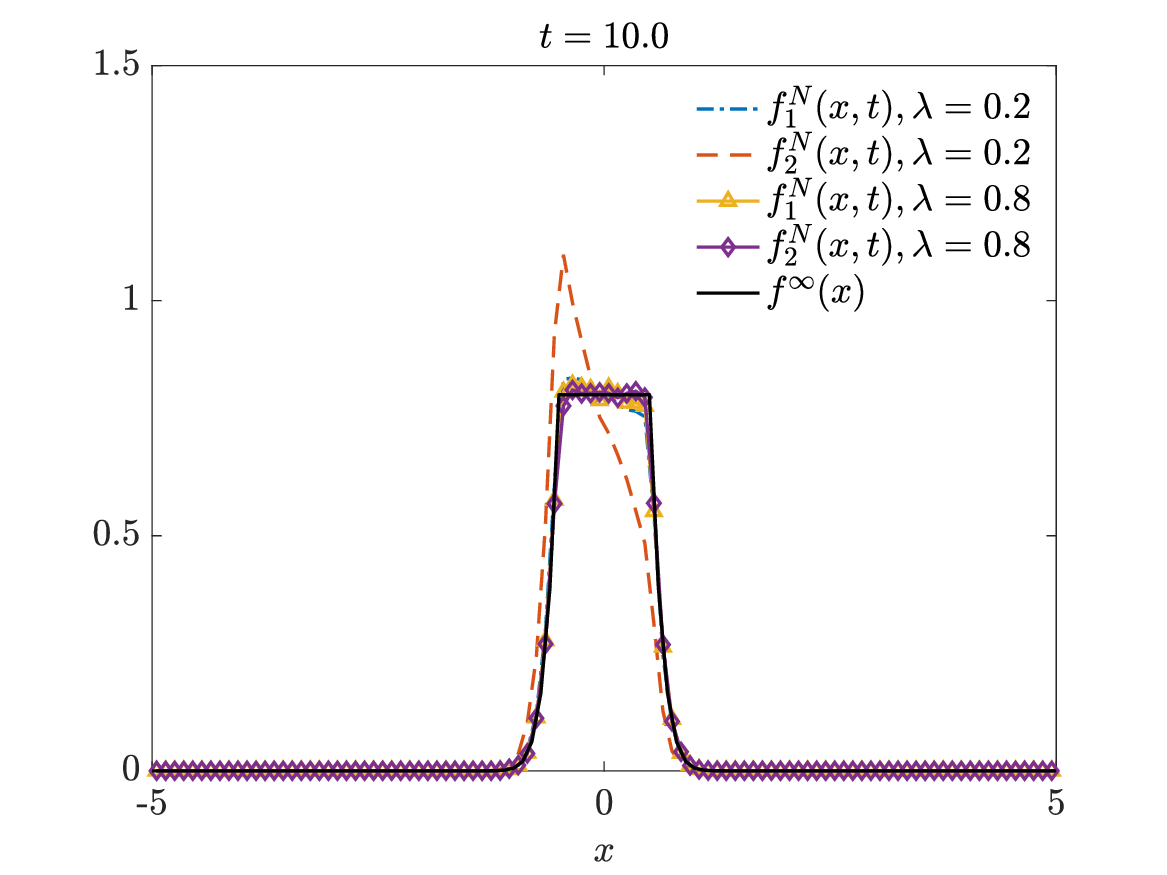}
\includegraphics[scale = 0.30]{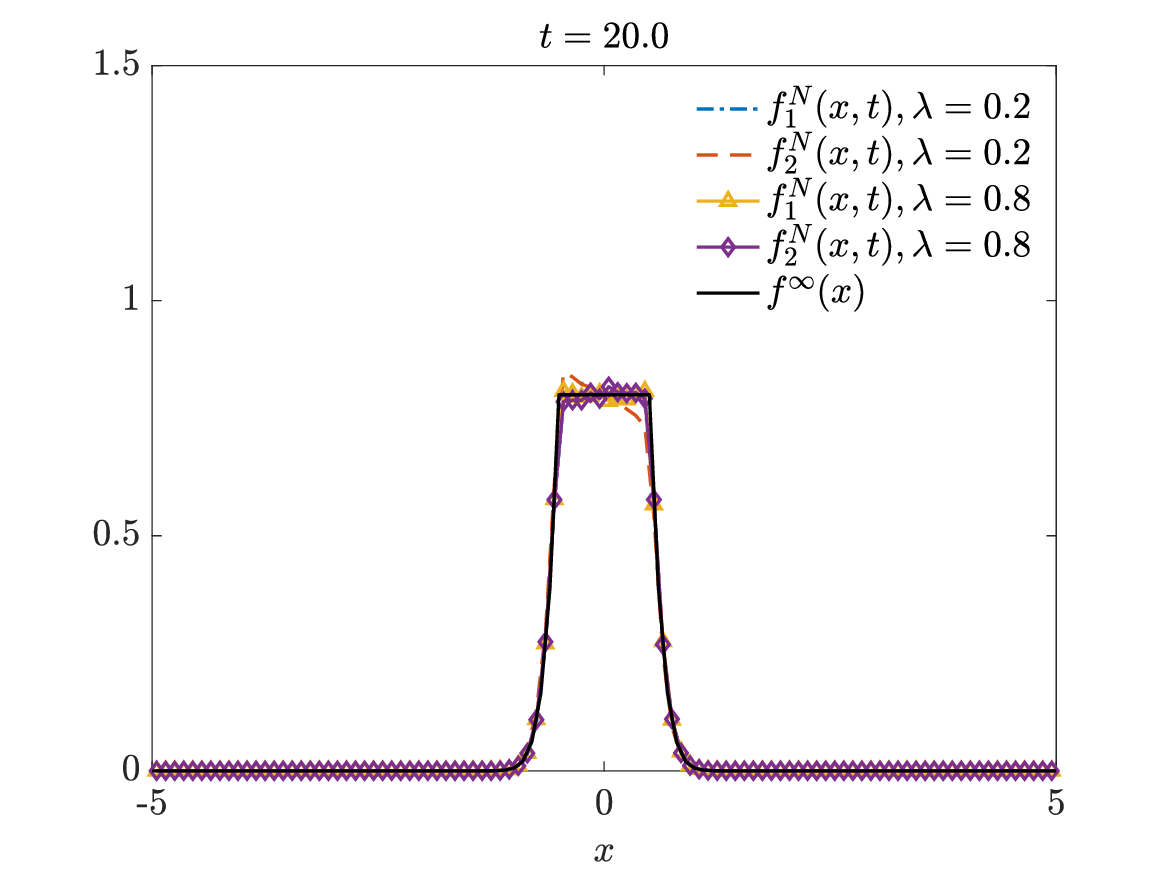}
\caption{\textbf{Test 1a}. Evolution of the reconstructed distribution functions of the particles' systems \eqref{eq:part1}-\eqref{eq:part2} given by $f^{N}_1(x)$, $f^{N}_2(x)$ in the case $P\equiv1$ at different times $t = 1,5,10,20$ and for $\lambda = 0.2,0.8$. We considered an Euler-Maruyama scheme with $N = 10^5$ and $\Delta t = 10^{-2}$, the histograms have been obtained in the interval $[-5,5]$ with $N_x = 101$ gridpoints and the target domain is $D = \{x \in \mathbb R: |x-x_0|\le \frac{1}{2}\}$, $x_0=0$ and $m_1,\sigma^2>0$  solution to the system \eqref{eq:system1} in such a way $m_2 = 0.8$ and $\delta = \frac{1}{2}$. }
\label{fig:lambda_infty}
\end{figure}

In the case $P\equiv 1$, we established in Section  \ref{sect:22} the equivalence  in terms of large-time asymptotics between \eqref{eq:FP} and the surrogate model with continuous drift \eqref{eq:FP_P1}. We will indicate with $f_2(\x,t)$ the solution at time $t\ge 0$ of the model \eqref{eq:FP_P1}. It is worth to remark that the transient regime of \eqref{eq:FP} and \eqref{eq:FP_P1} are not equal even though they both converge towards the same equilibrium state $f^\infty(\x)$. The surrogate model can be obtained from the following particles' system
\begin{equation}
\label{eq:part2}
\begin{split}
d\x_i(t) = &\left(\x_i - \tilde{\x}_i \right )dt + \sqrt{2\kappa(\x_i,t)}d\mathbf W_i^t,
\end{split}
\end{equation}
where, as before, we denoted by $\left\{\mathbf W_i^t\right\}_{i=1}^N$ a vector of $N$ independent $d$-dimensional Wiener processes, $\tilde{\x}_i = \lambda \x_0 + \mu \bar{\mathbf{u}}(t)$, being $\bar{\mathbf{u}}(t) = \frac{1}{N} \sum_{i=1}^N \x_i(t)$, and $\kappa(\cdot,\cdot)$ has been defined in \eqref{eq:FP_simp_diffusion}. Hence, we indicate with $f_2^{N}(\x)$ the reconstructed distribution of the particles' system $\{x_i(t)\}_{i=1}^N$ solution to \eqref{eq:part2}. 

In Figure \ref{fig:lambda_infty} we study the convergence of $f_1^{N}(x)$ and $f_2^{N}(x)$ for large times towards $f^\infty(x)$ defined in \eqref{eq:finfty_1} in the one dimensional case $d = 1$. We considered the values of $\lambda = 0.2,0.8$ and we report the empirical densities at times^^>$t=1,5,10,20$. 

As initial distribution we considered a sum of Gaussian densities centered in $x = \pm 2$ of the form
\begin{equation}
\label{eq:f0_test1}
f(x,0) = \dfrac{3}{4\sqrt{2\pi \sigma_0^2}}\exp\left\{-\dfrac{(x+2)^2}{2\sigma_0^2}\right\} + \dfrac{1}{4\sqrt{2\pi \sigma_0^2}}\exp\left\{-\dfrac{(x-2)^2}{2\sigma_0^2}\right\}, 
\end{equation}
with $\sigma^2_0 = \frac{1}{10}$.
Furthermore, we fixed the target domain $D = \{x \in \mathbb R: |x-x_0|\le \frac{1}{2}\}$ with $x_0 = 0$. The diffusion coefficient $\sigma^2$ have been obtained to guarantee that $m_2 = 0.8$, i.e. with probability $0.8$ a particle lies inside $D$ for long times.  Hence, we constructed an initial sample of particles $\{x_i(0)\}_{i=1}^N$ whose distribution is $f(x,0)$. The evolution of the particles' systems \eqref{eq:part1}-\eqref{eq:part2} have been obtained through an Euler-Maruyama scheme with $\Delta t = 10^{-2}$ on a set of $N = 10^5$ particles. We may observe how both $f_1^{N}(x,t)$ and $f_2^{N}(x,t)$ converge in time to the analytical $f^\infty(x)$ as discussed in Section \ref{sect:2}. Furthermore, we may observe how the dynamics described by $f_1^{N}(x,t)$ and $f_2^{N}(x,t)$ are different for all $t\ge0$ finite.  

\begin{figure}
\centering
\includegraphics[scale = 0.2]{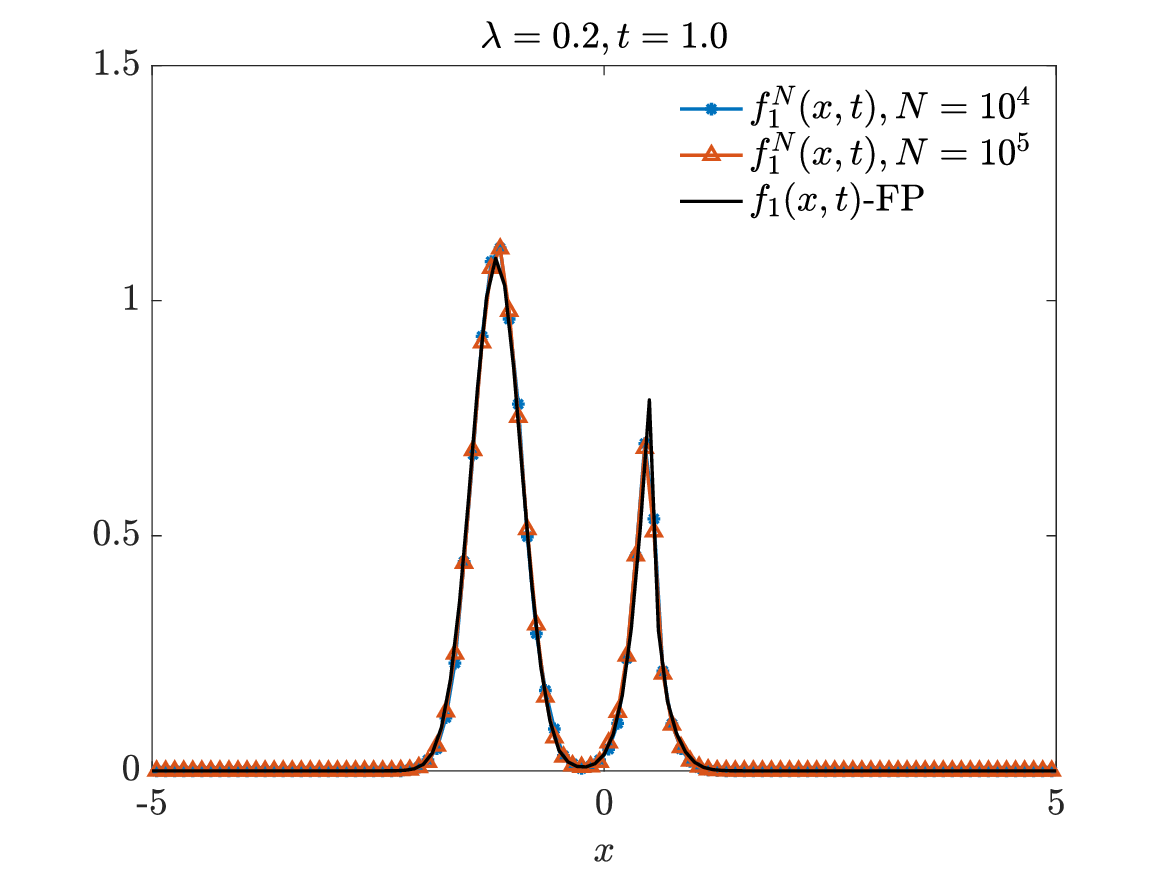}
\includegraphics[scale = 0.2]{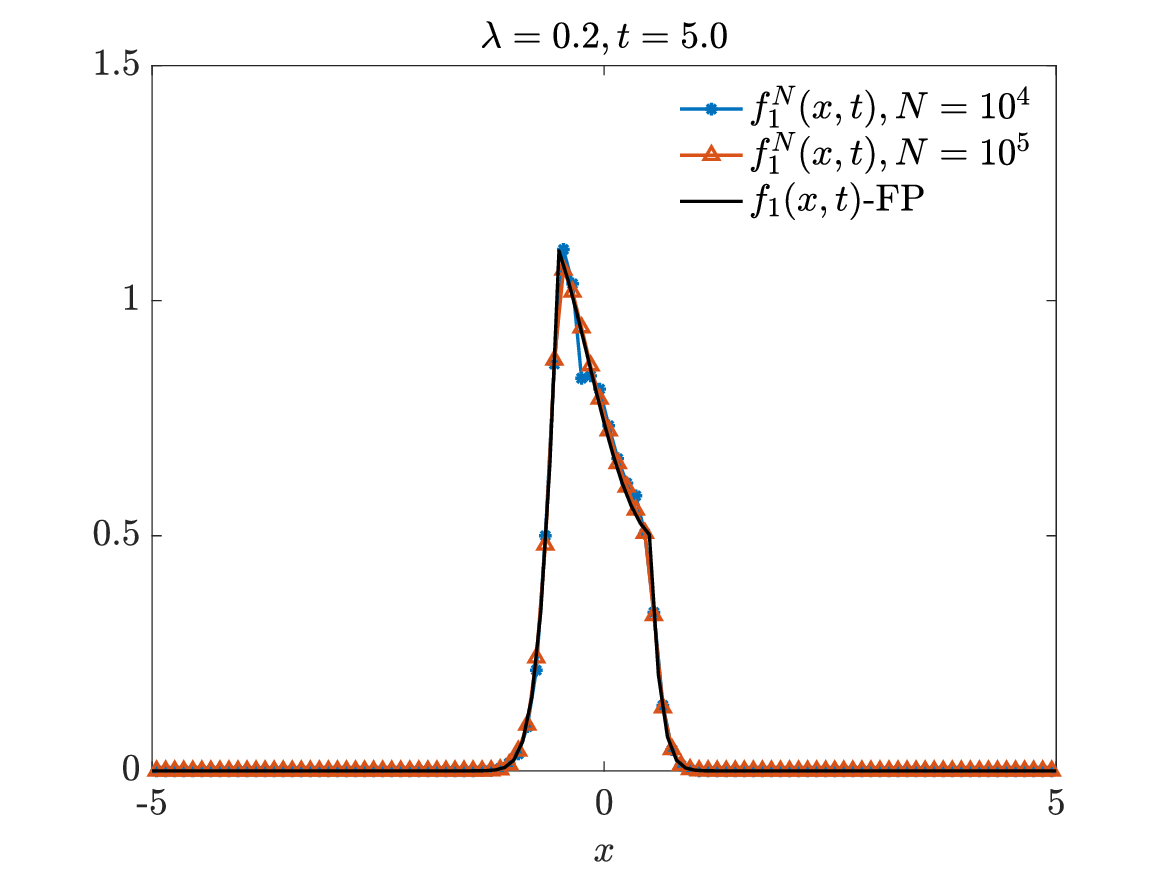}
\includegraphics[scale = 0.2]{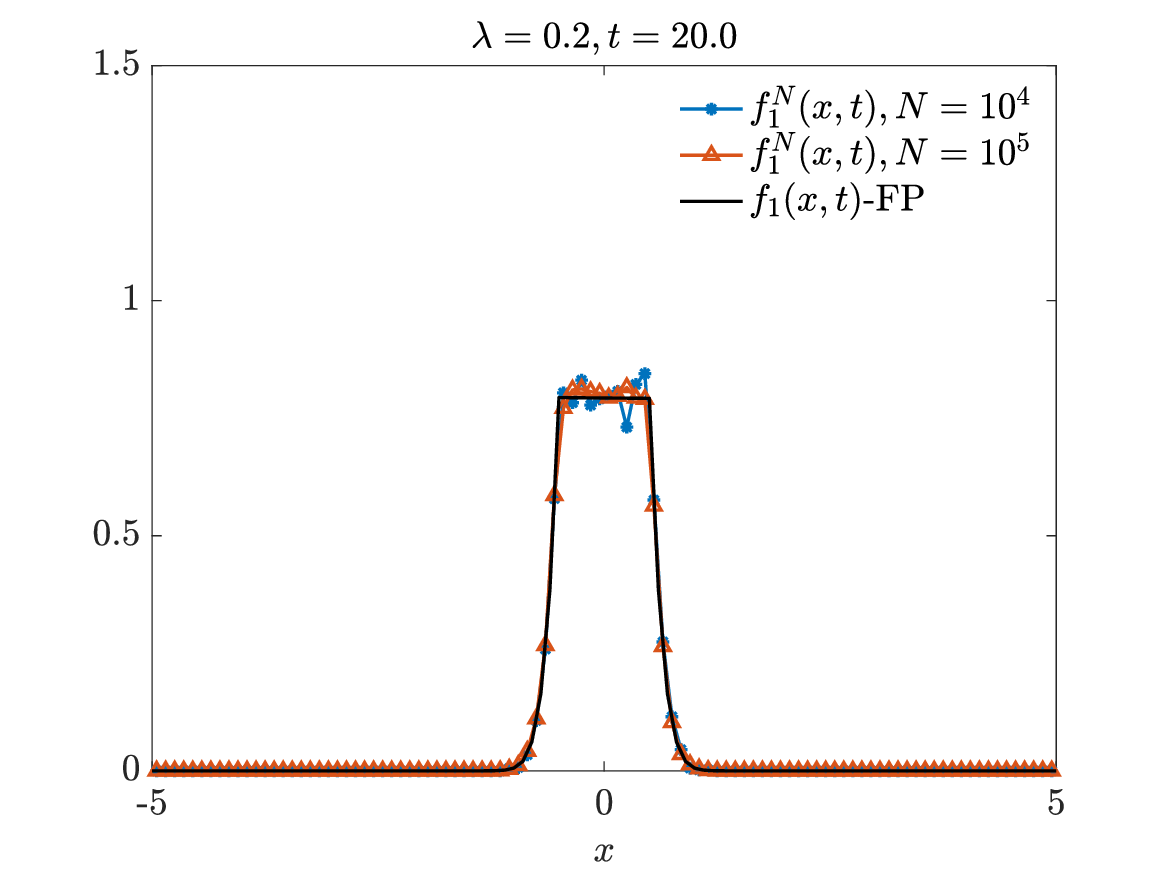}\\
\includegraphics[scale = 0.2]{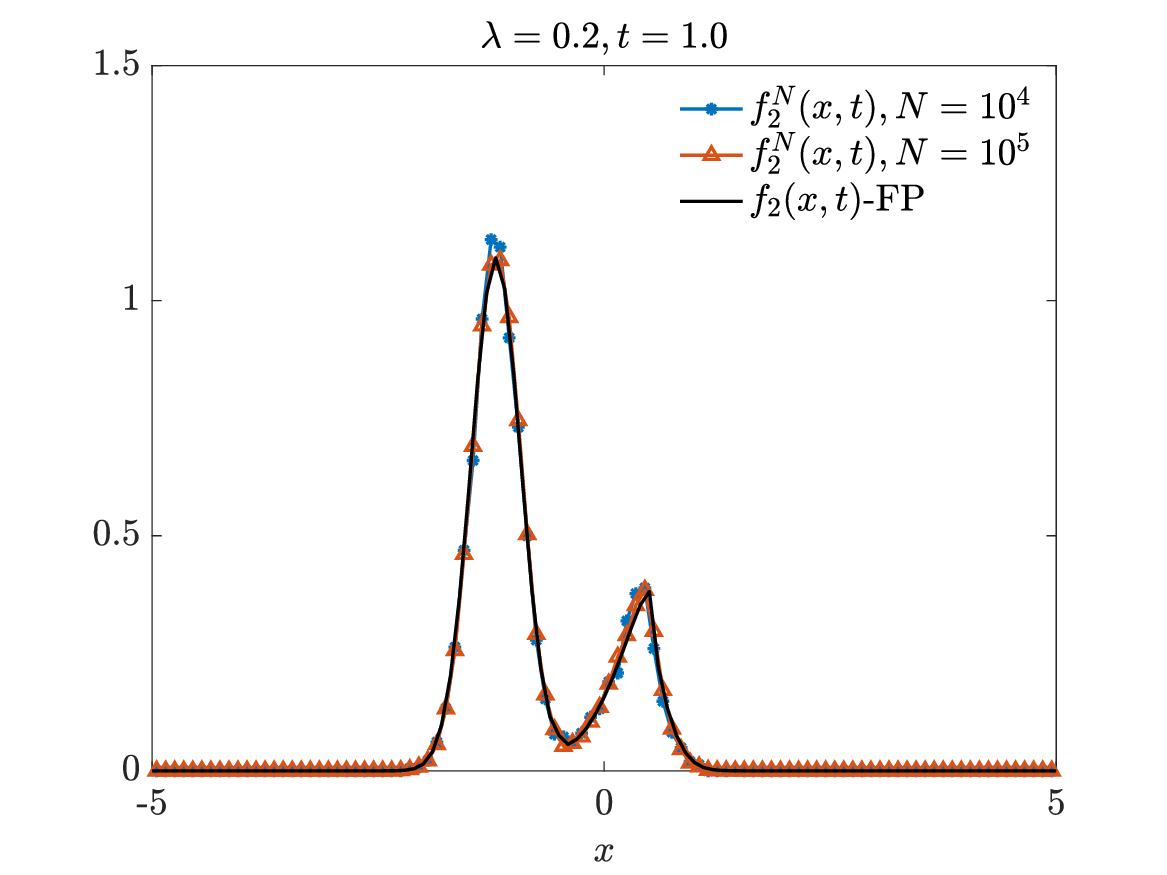}
\includegraphics[scale = 0.2]{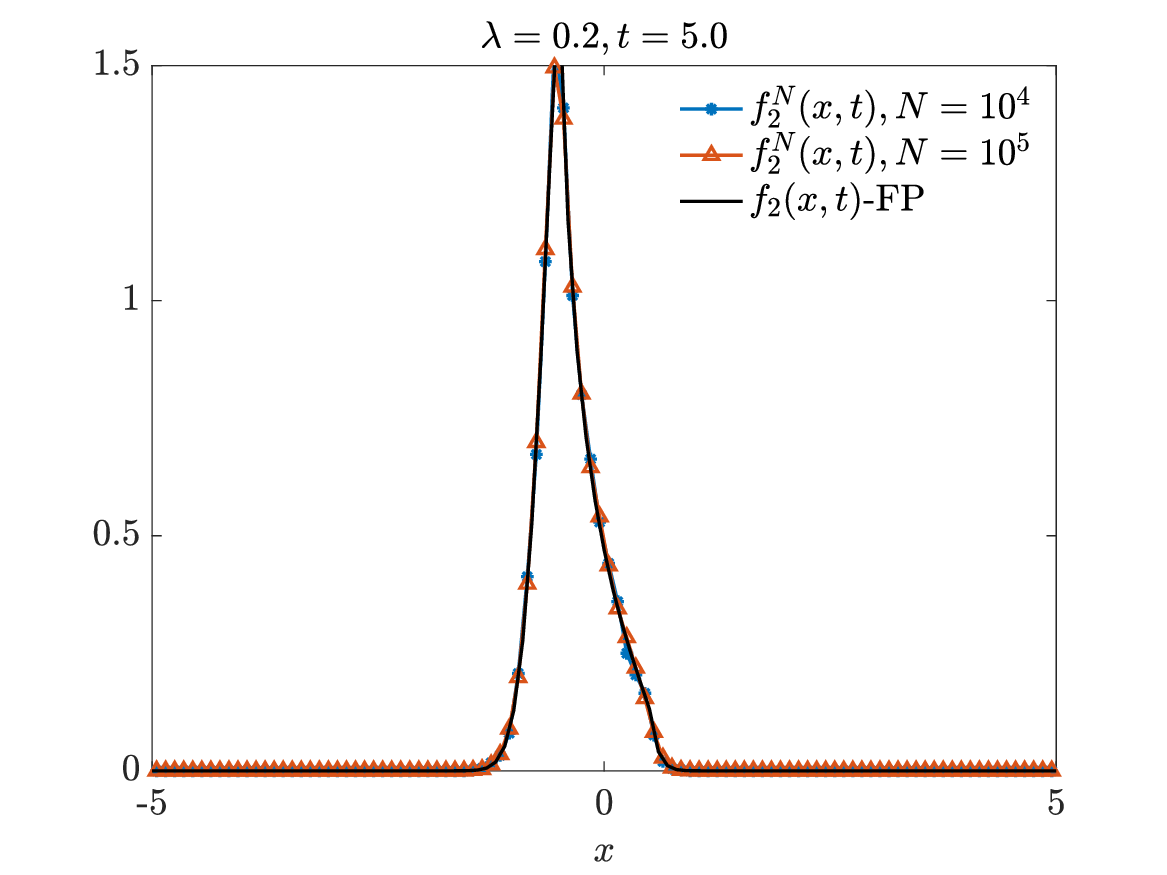}
\includegraphics[scale = 0.2]{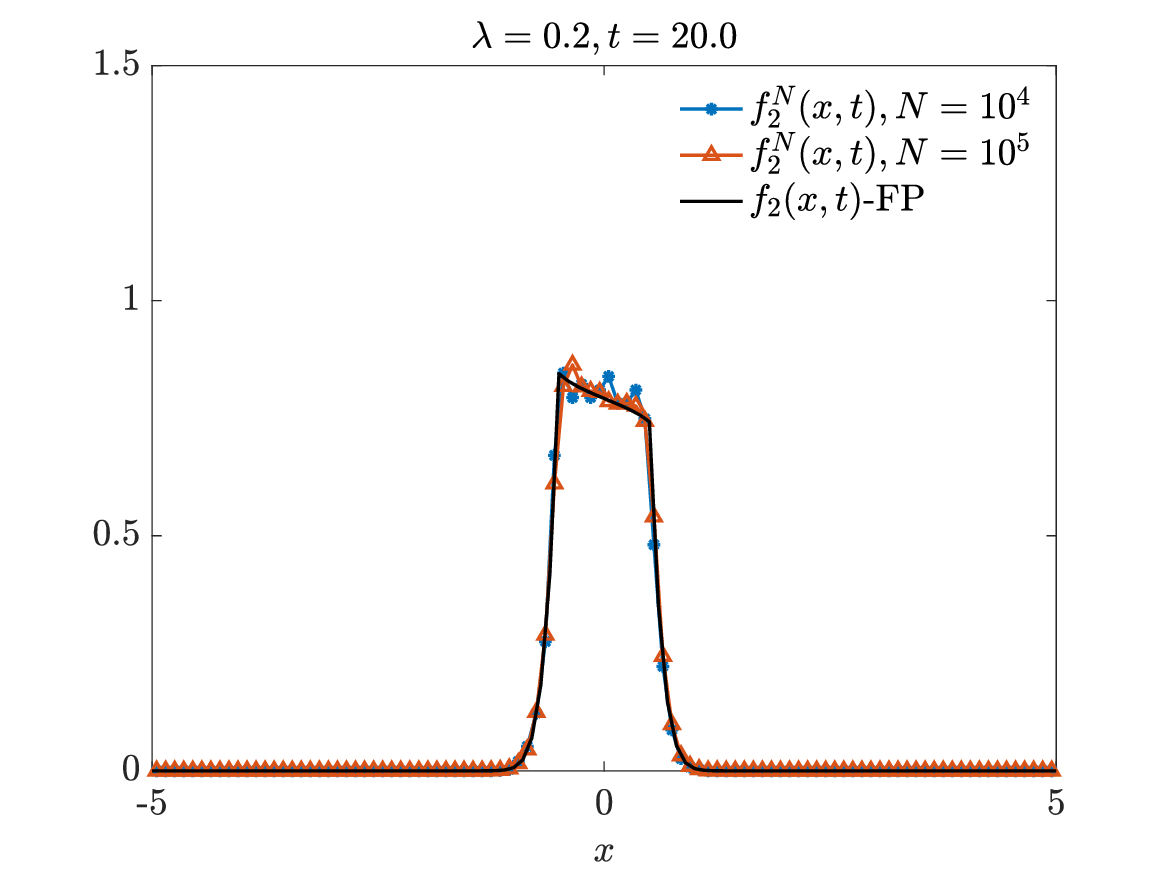}
\caption{\textbf{Test 1a}. Comparison between the reconstructed distribution functions $f_1^{N}(x,t)$ (top row) and $f_2^{N}(x,t)$ (bottom row) for increasing $N = 10^{4}$, $N = 10^5$ of the particles' systems \eqref{eq:part1}-\eqref{eq:part2} in the case $P\equiv 1$, with the numerical solution of Fokker-Planck models defined in \eqref{eq:FP}-\eqref{eq:FP_P1} and denoted by $f_1(x,t)$, $f_2(x,t)$.  We considered $\lambda = 0.2$,  a discretization of the interval $[-5,5]$ obtained with $N_x = 101$ gridpoints. The target domain is $D = \{x \in \mathbb R: |x-x_0|\le \frac{1}{2}\}$, $x_0=0$ and $m_1,\sigma^2>0$ solution to \eqref{eq:system1} with $m_2 = 0.8$ and $\delta = \frac{1}{2}$. Initial condition defined in \eqref{eq:f0_test1}.}
\label{fig:f12_N}
\end{figure}


Hence, to test the consistency of the particles' systems \eqref{eq:part1}-\eqref{eq:part2} we compare the evolutions of $f_1^{N}(x,t)$ and $f_2^{N}(x,t)$ for increasing $N\gg0$ with the respective solutions to the Fokker-Planck models \eqref{eq:FP} and \eqref{eq:FP_P1} defined as $f_1(x,t)$ and $f_2(x,t)$. In Figure \ref{fig:f12_N} we test numerically the consistency of the Fokker-Planck descriptions with the densities $f_1^{N}$ (top row) and $f_2^N$ (bottom row) characterizing the particles systems \eqref{eq:part1}-\eqref{eq:part2} respectively and obtained with $N = 10^4, 10^5$. The reconstruction of the densities have been done over the interval $[-5,5]$ discretized with $N_x = 101$ gridpoints. We plotted with the black line the numerical approximation of the Fokker-Planck models over $[-5,5]$ discretized with $N_x = 101$ gridpoints. It is observed how, for an increasing number of particles the Fokker-Planck models become consistent with the particle dynamics. 

\begin{figure}
\centering
\includegraphics[scale = 0.30]{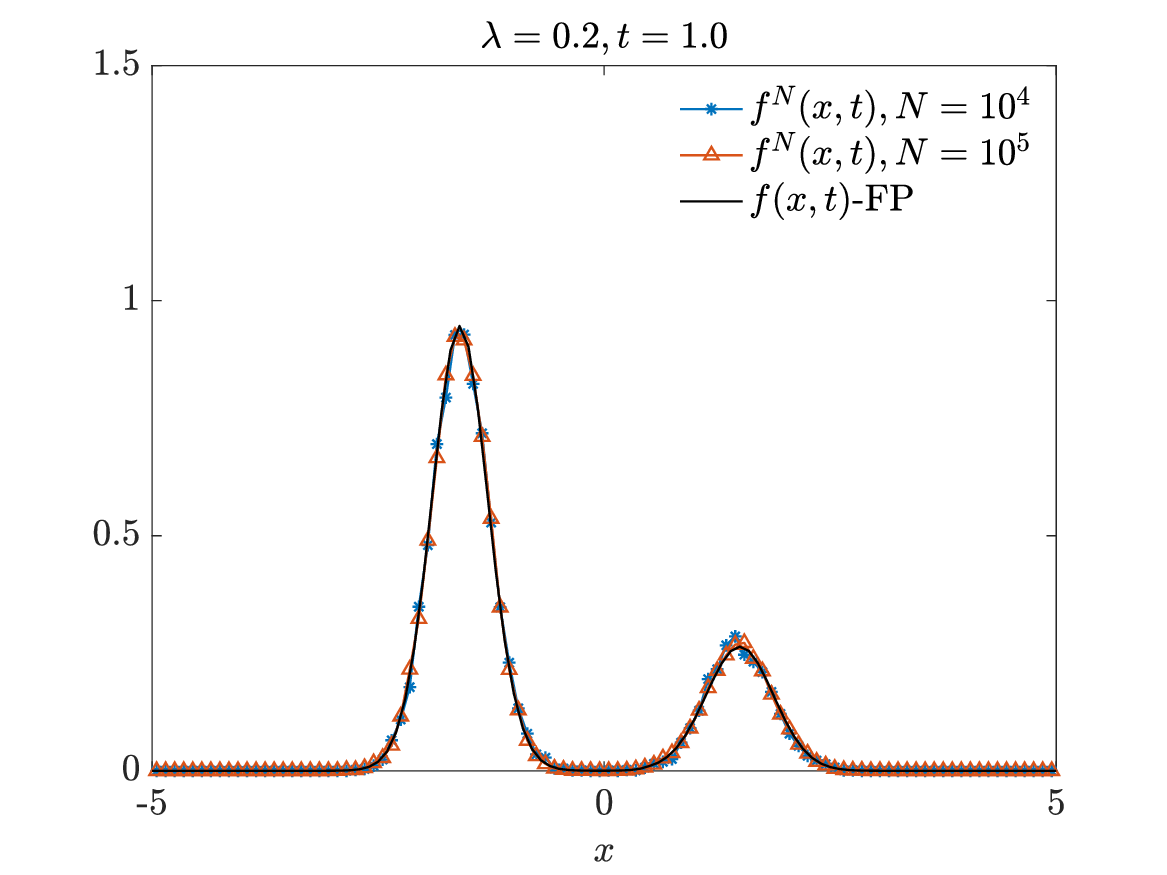}
\includegraphics[scale = 0.30]{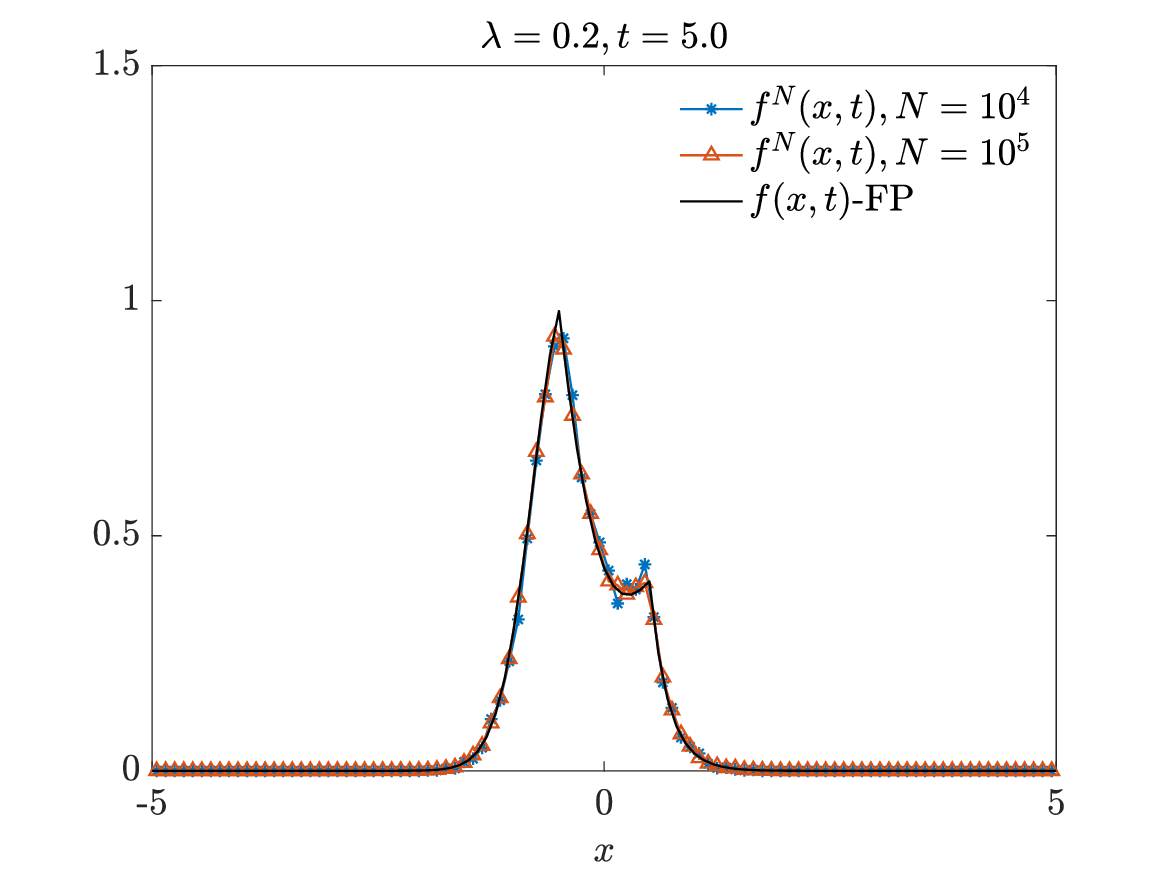}
\includegraphics[scale = 0.30]{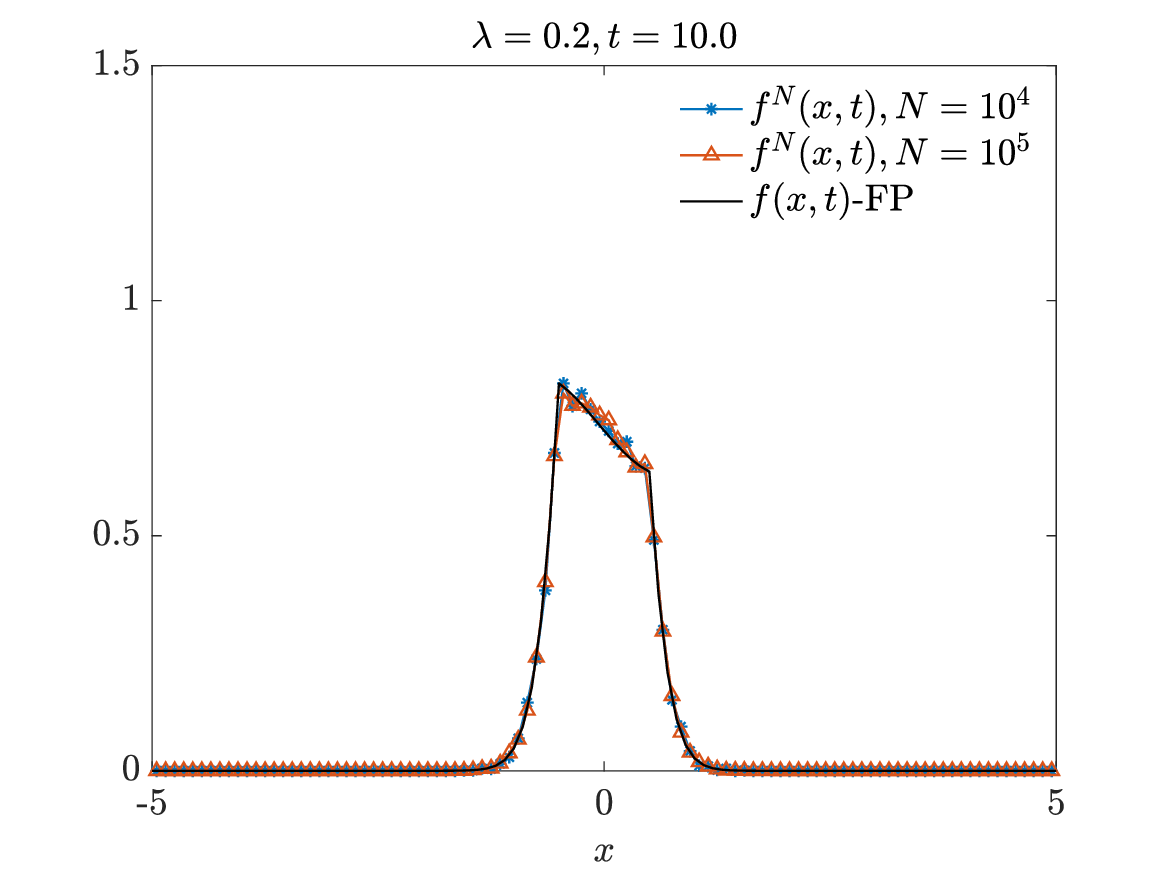}
\includegraphics[scale = 0.30]{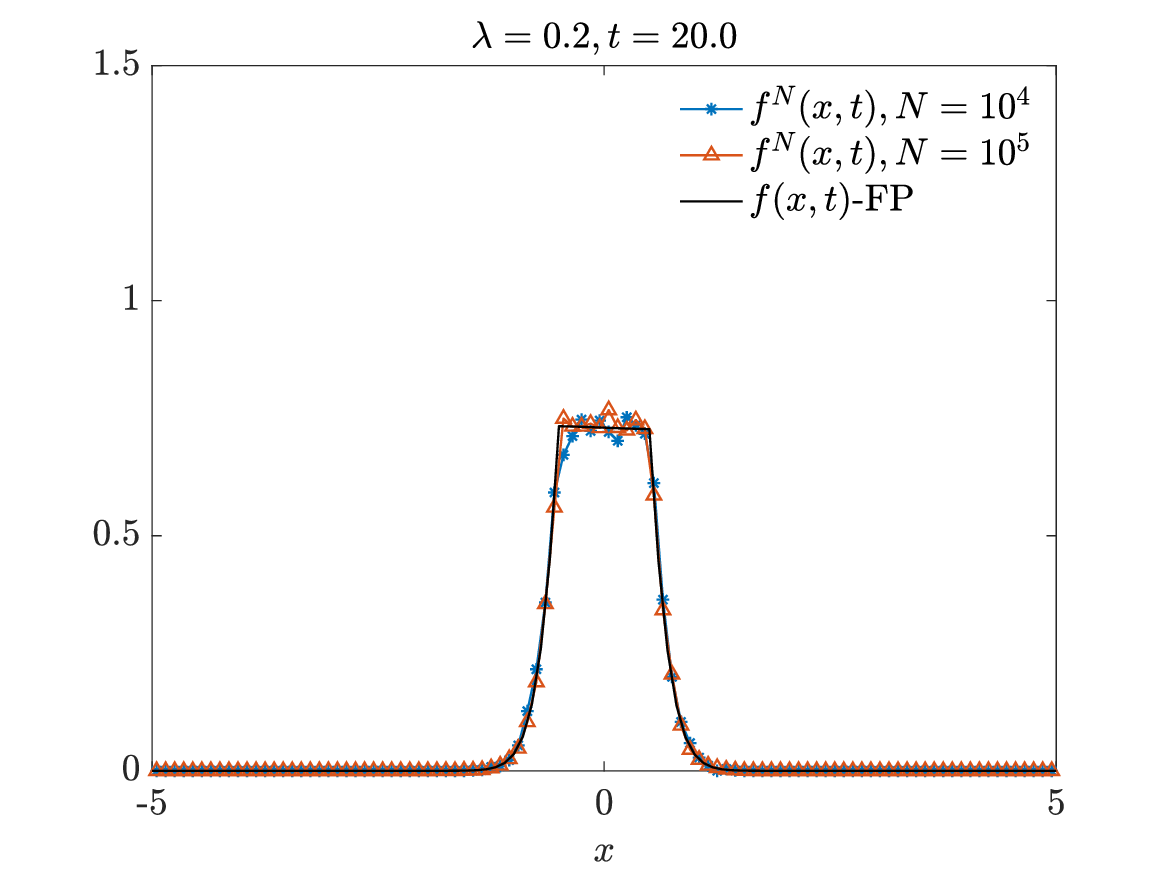}
\caption{\textbf{Test 1b}. Comparison between the reconstructed distribution function $f^{N}(x,t)$  for increasing $N = 10^{4}$, $N = 10^5$ of the particles' systems \eqref{eq:part1} in the case $P(x,y)$ in \eqref{eq:CS}, with the numerical solution of Fokker-Planck models defined in \eqref{eq:FP} and denoted by $f(x,t)$.  We considered $\lambda = 0.2$,  a discretization of the interval $[-5,5]$ obtained with $N_x = 101$ gridpoints. The target domain is $D = \{x \in \mathbb R: |x-x_0|\le \frac{1}{2}\}$, $x_0=0$ and $\sigma^2 = 0.2$ of Figure \ref{fig:f12_N} and $\delta = \frac{1}{2}$. Initial condition defined in \eqref{eq:f0_test1}.}
\label{fig:CS1}
\end{figure}

\subsubsection{Test 1b: Nonuniform interaction forces }
In this test we consider a particles' system evolving through nonuniform interaction forces by considering the space-dependent  interaction function of the form
\begin{equation}
\label{eq:CS}
P(\x,\mathbf y) = \dfrac{1}{\left(1+|\x-\mathbf y|^2\right)^\gamma}, \qquad \gamma>0, \qquad \x,\mathbf{y} \in \mathbb R^d.
\end{equation}
in \eqref{eq:FP}. In all the following  tests we will fix $\gamma = 1$. It is worth to remark that the form of $P(\cdot,\cdot)>0$ is consistent with well known Cucker-Smale-type models for swarming of large flocks \cite{CS}, see also \cite{CFRT,CFTV} for a review of connected kinetic models with nonlocal interactions. In Figure \ref{fig:CS1} we present the evolution of the numerical approximation of the Fokker-Planck model \eqref{eq:FP}, with $d = 1$ and interactions defined in \eqref{eq:CS} and $\lambda = 0.2$, compared with the particles' dynamics \eqref{eq:part1} in the one dimensional case. As initial condition as considered \eqref{eq:f0_test1} and the target domain is  $D = \{x \in \mathbb R:|x-x_0|\le \frac{1}{2}\}$ with $x_0=0$. We considered an increasing number of particles $N = 10^4$ and $N = 10^5$ whose dynamics is integrated in the time interval $[0,20]$, $\Delta t = 10^{-2}$. For all times, the evolution of the particles distribution $f^{N}(x,t)$ is consistent with the solution to the numerical Fokker-Planck model \eqref{eq:FP}. 


In order to compare the influence of interactions on the convergence to the target domain we report in Figure \ref{fig:uevo} the evolution of the mean position of the swarm both the cases \eqref{eq:part1}-\eqref{eq:part2} with $P\equiv1 $ and in the case defined by \eqref{eq:part1} with position dependent interaction given by $P(\cdot,\cdot)$ as in \eqref{eq:CS}. We will denote with $\bar u_1$ the mean position of the dynamics  \eqref{eq:part1} with $P\equiv 1$, with $\bar u_2$ the mean position of the dynamics \eqref{eq:part2} and, finally, with $\bar u(t)$ the mean position of the dynamics \eqref{eq:part1} with Cucker-Smale interactions \eqref{eq:CS}. We considered both a dynamics where the information on $x_0 \in D$ domain is characterized by $\lambda = 0.2$ (left) and the case $\lambda = 0.8$ (right). It is easily observed how for a high value of $\lambda \in [0,1]$ the three dynamics reach fast the target. On the other hand, for a small $\lambda \in [0,1]$, we can observe that in the case of uniform interactions the mean position of the swarm reaches faster the target for dynamics of the type \eqref{eq:part1} whereas the case of nonconstant diffusion defined in \eqref{eq:part2} is slower in reaching the target. Furthermore, for the considered initial distribution of particles, nonuniform space-dependent interactions are faster than uniform interactions with nonconstant diffusions. 


\begin{figure}
\centering
\includegraphics[scale = 0.3]{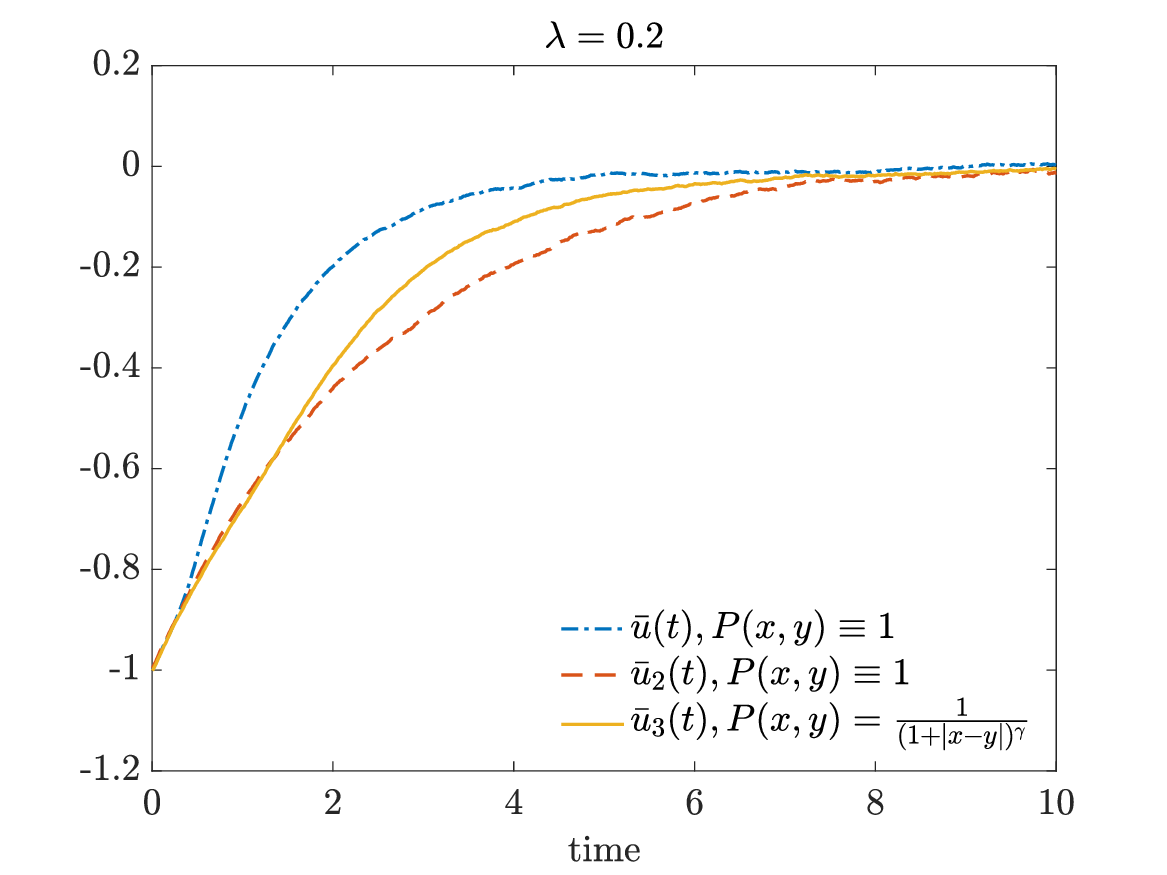}
\includegraphics[scale = 0.3]{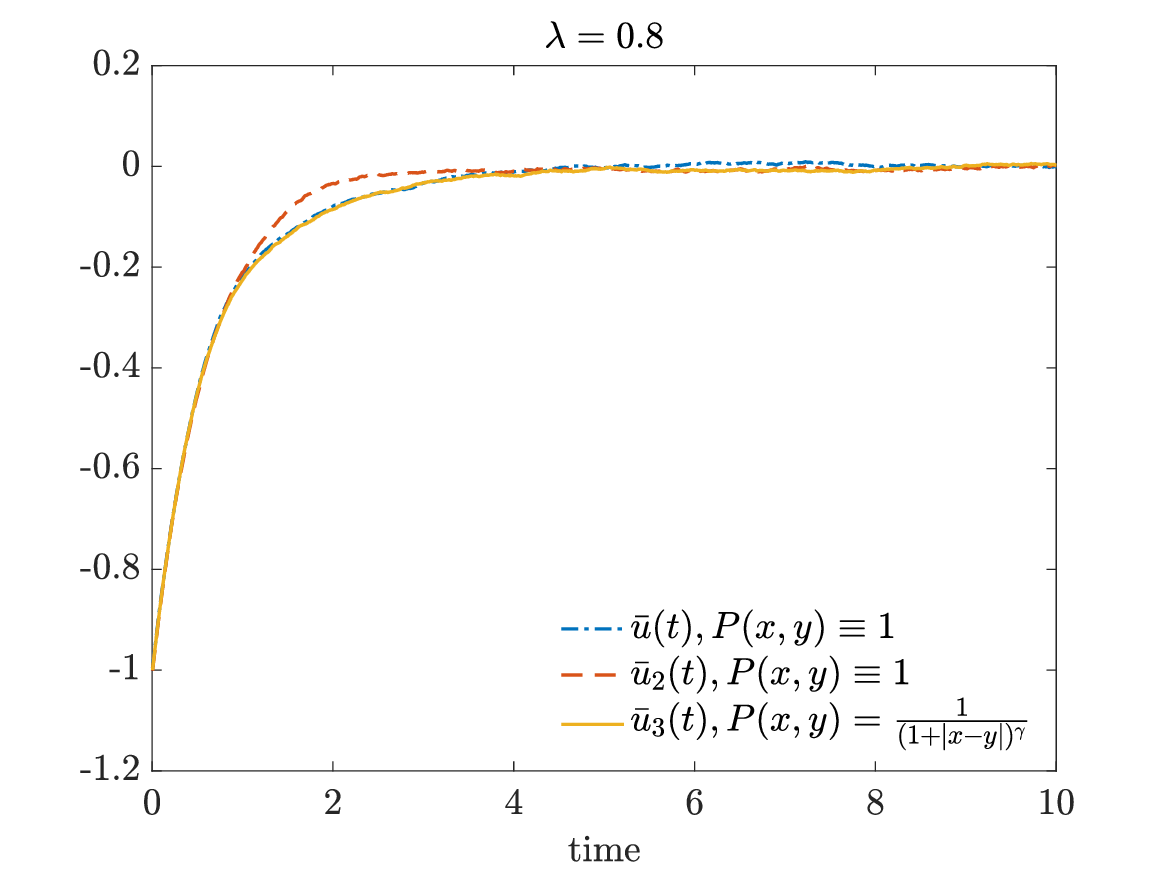}
\caption{\textbf{Test 1b}. Evolution of the particles' mean position. We denote with $\bar u_1(t)$ the mean position obtained from \eqref{eq:part1}, with $\bar u_2(t)$ the one obtained from \eqref{eq:part2} and with $\bar u_3(t)$ the one obtained with space dependent interactions $P(x,y)$ \eqref{eq:CS} in \eqref{eq:part1}. In all the tests we considered $N = 10^4$ particles evolving over the time interval $[0,10]$ with $\Delta t = 10^{-2}$. The target domain is $D = \{x \in \mathbb R: |x-x_0|\le \frac{1}{2}\}$ and we fixed the relevant parameters of Figure \ref{fig:CS1}. We considered the case $\lambda = 0.2$ (left) and $\lambda = 0.8$ (right). Initial distribution defined in \eqref{eq:f0_test1}. }
\label{fig:uevo}
\end{figure}

\subsubsection{Test 1c: 2D case}

\begin{figure}
\centering
\includegraphics[scale = 0.20]{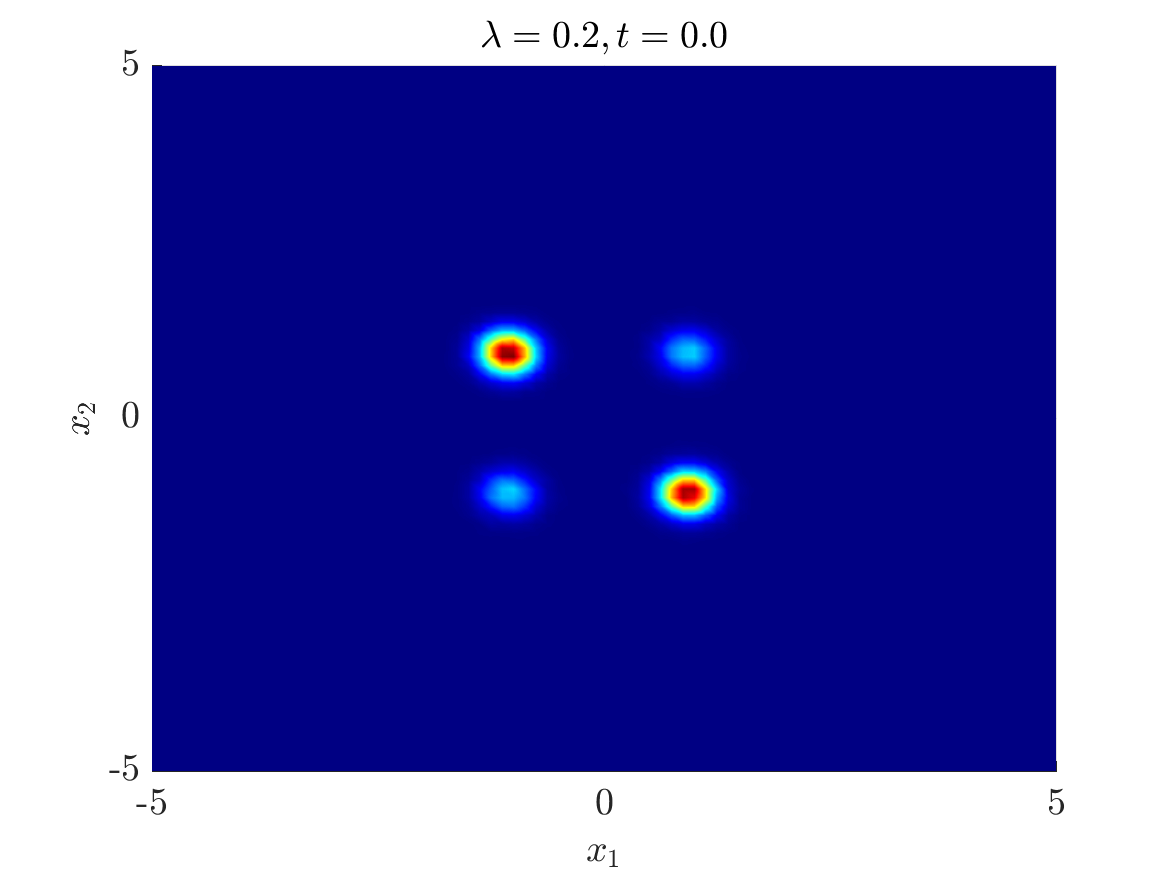}
\includegraphics[scale = 0.20]{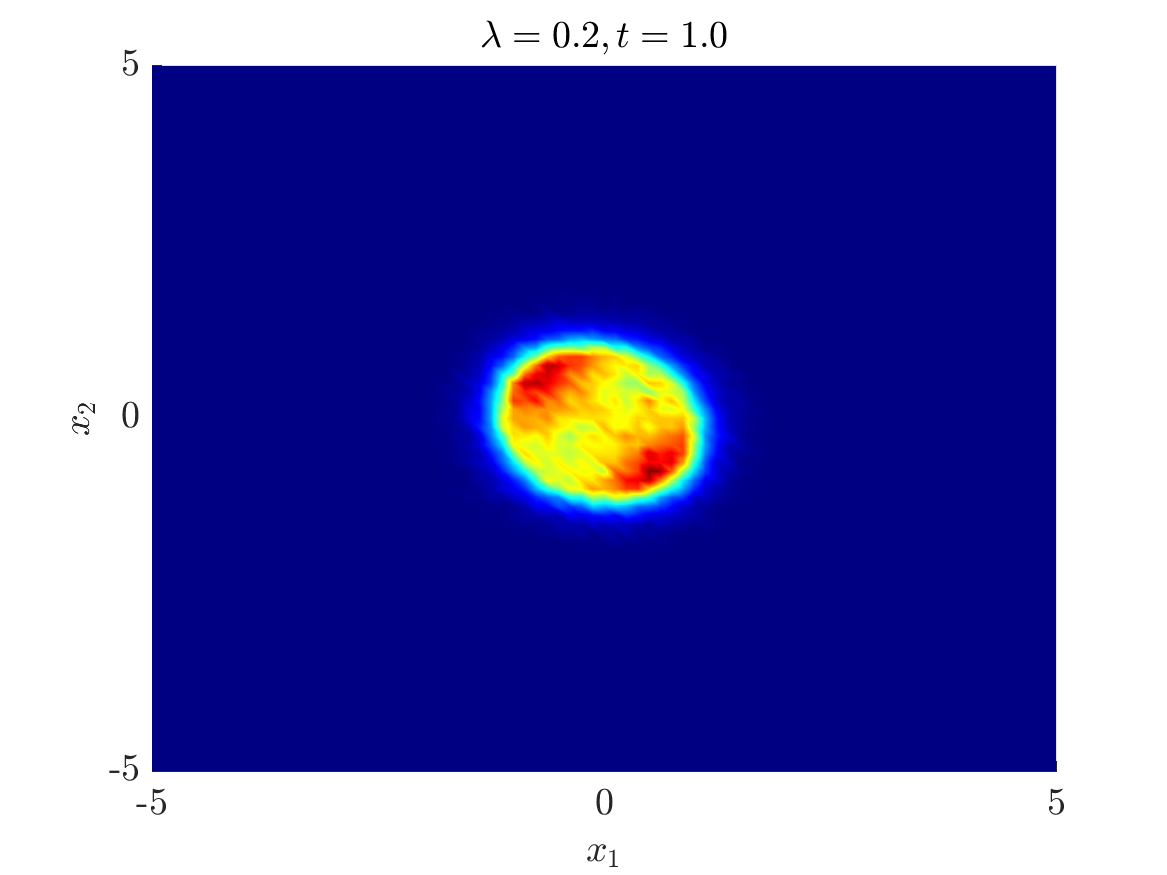}
\includegraphics[scale = 0.20]{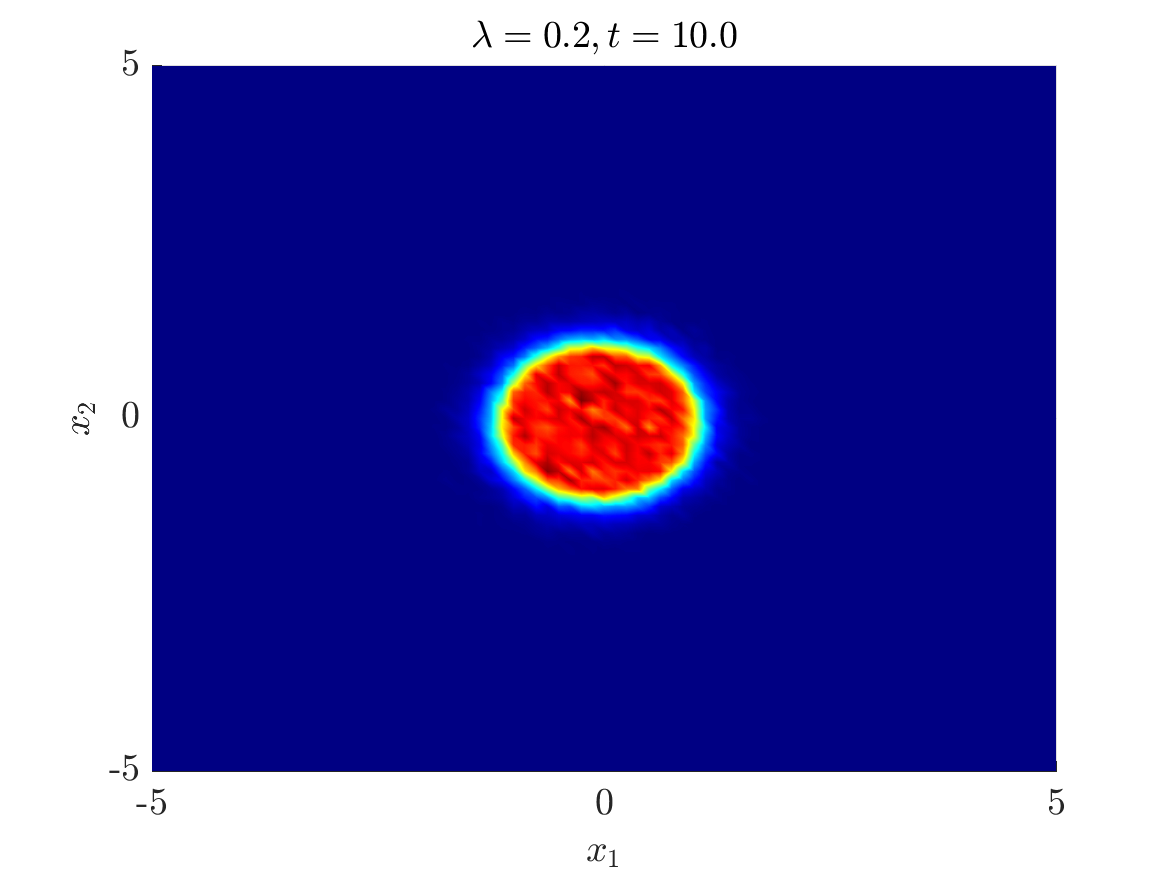}\\
\includegraphics[scale = 0.2]{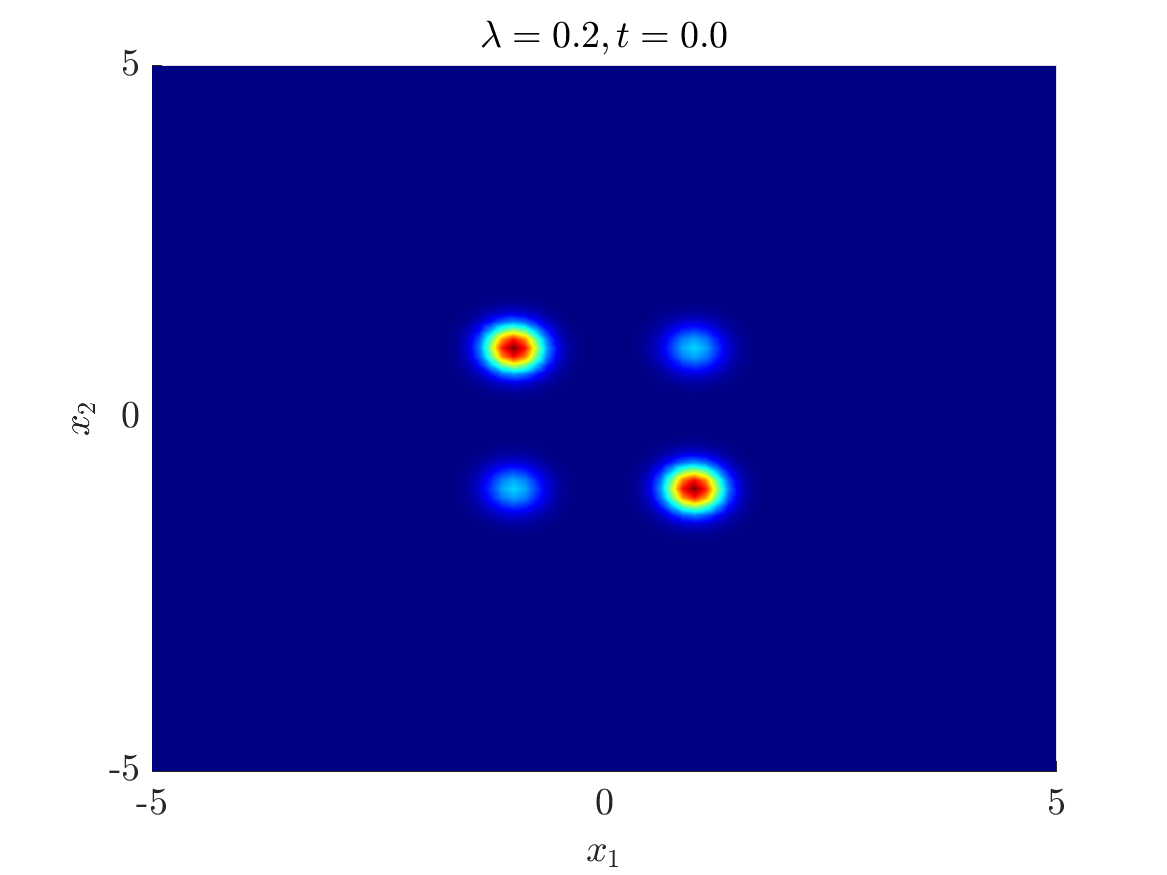}
\includegraphics[scale = 0.2]{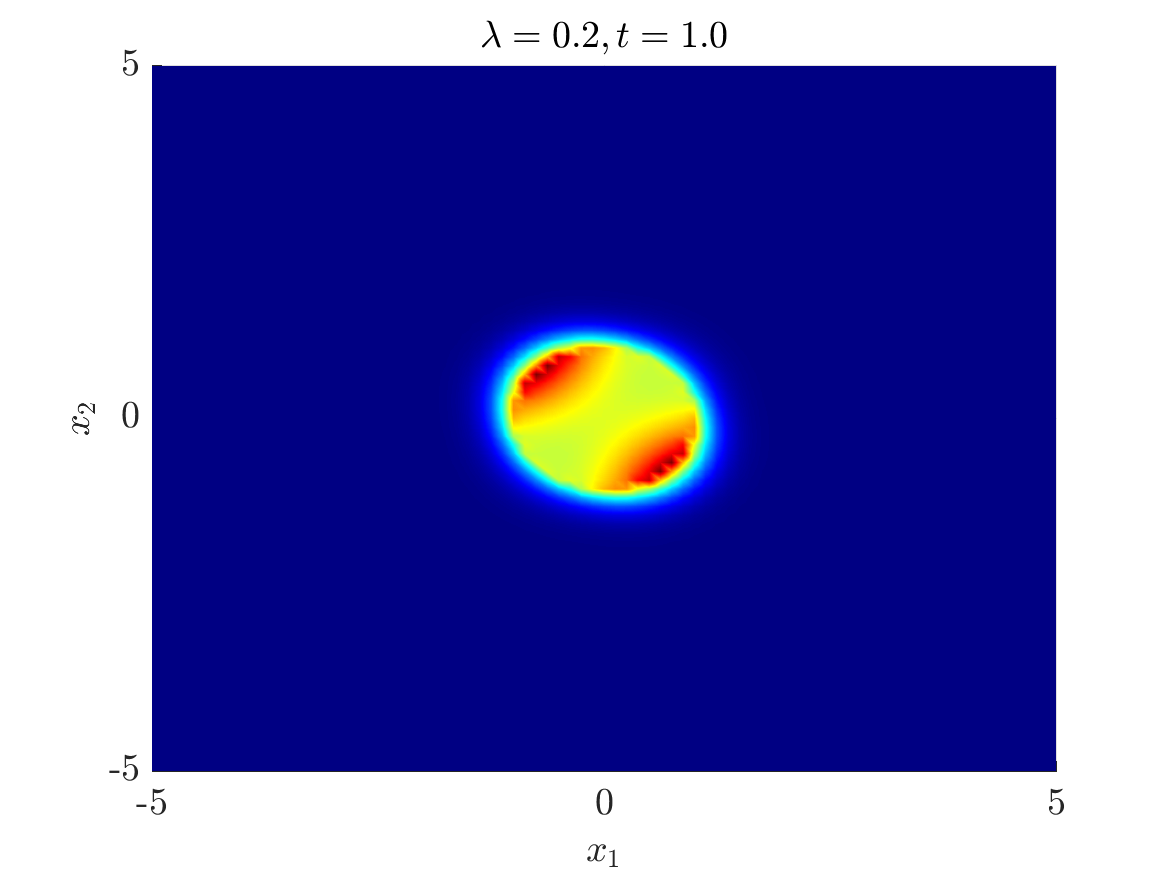}
\includegraphics[scale = 0.2]{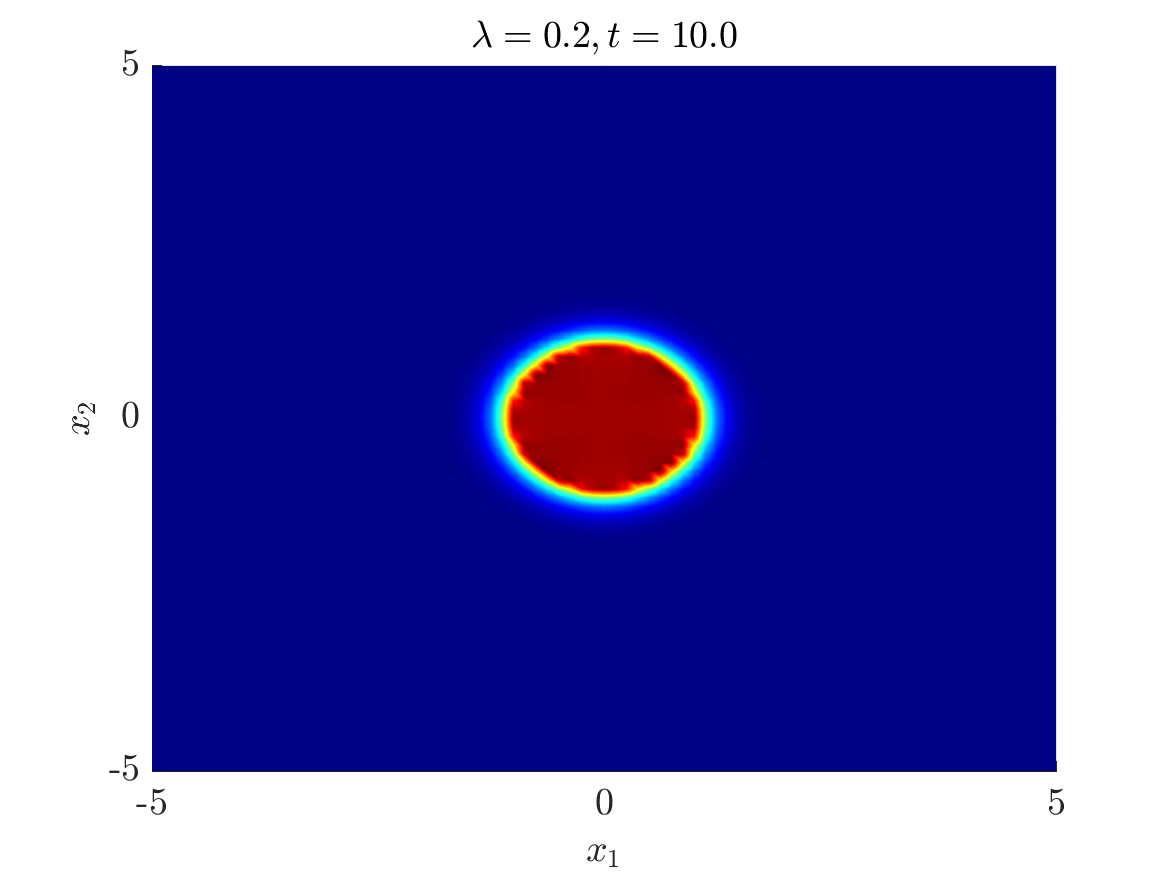}\\
\includegraphics[scale = 0.2]{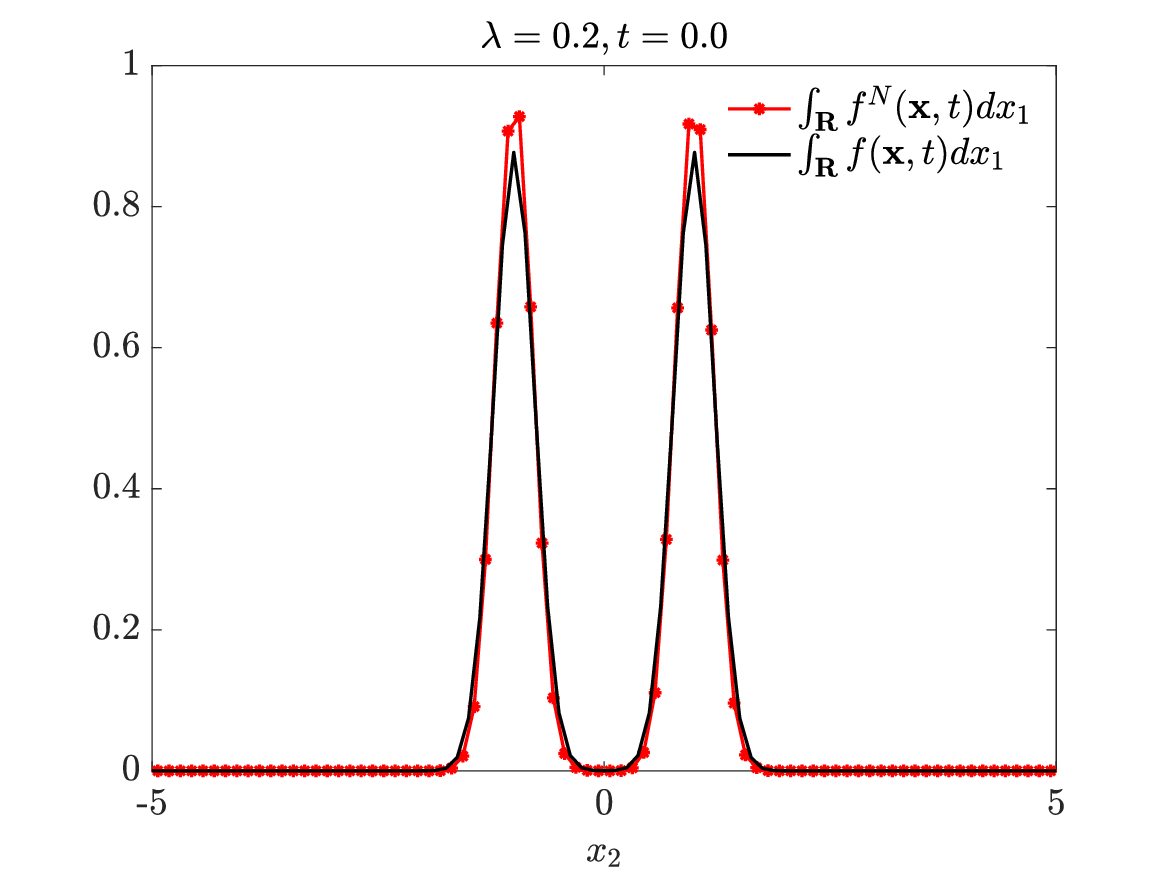}
\includegraphics[scale = 0.2]{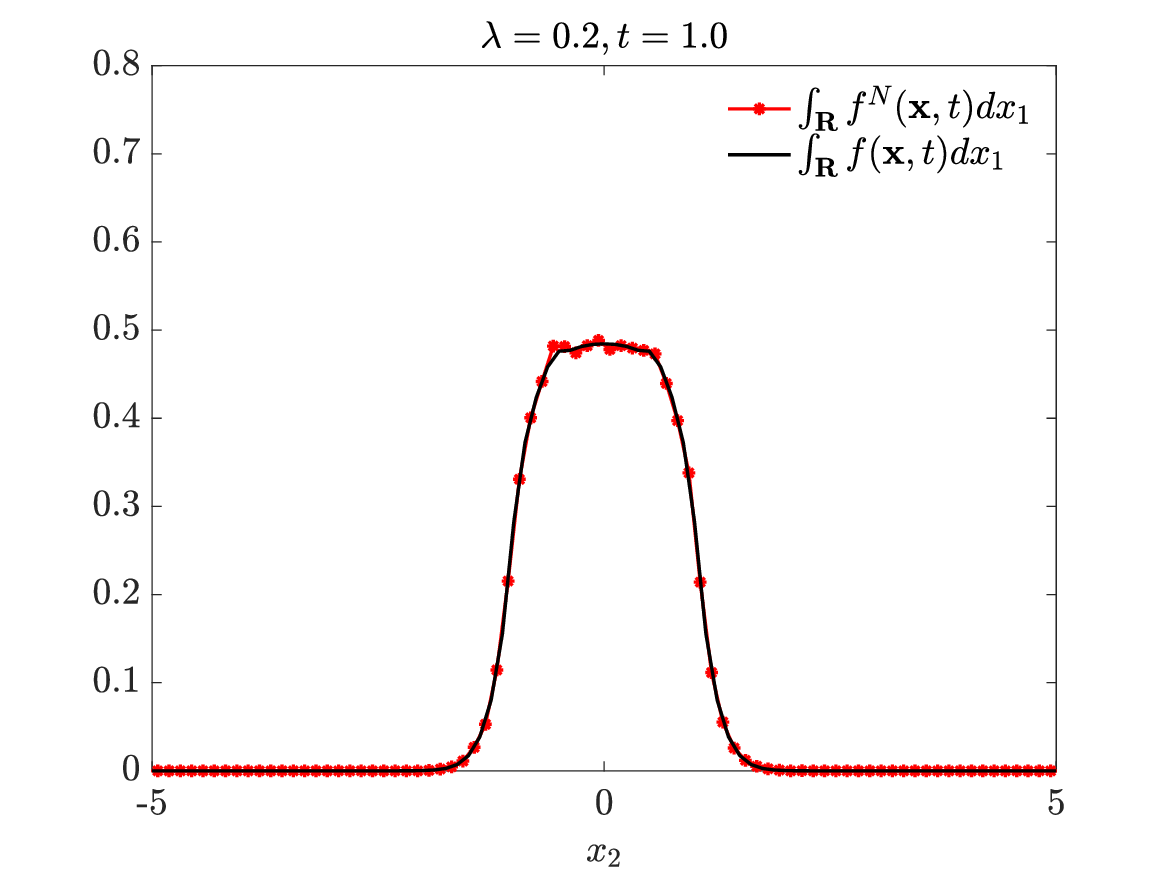}
\includegraphics[scale = 0.2]{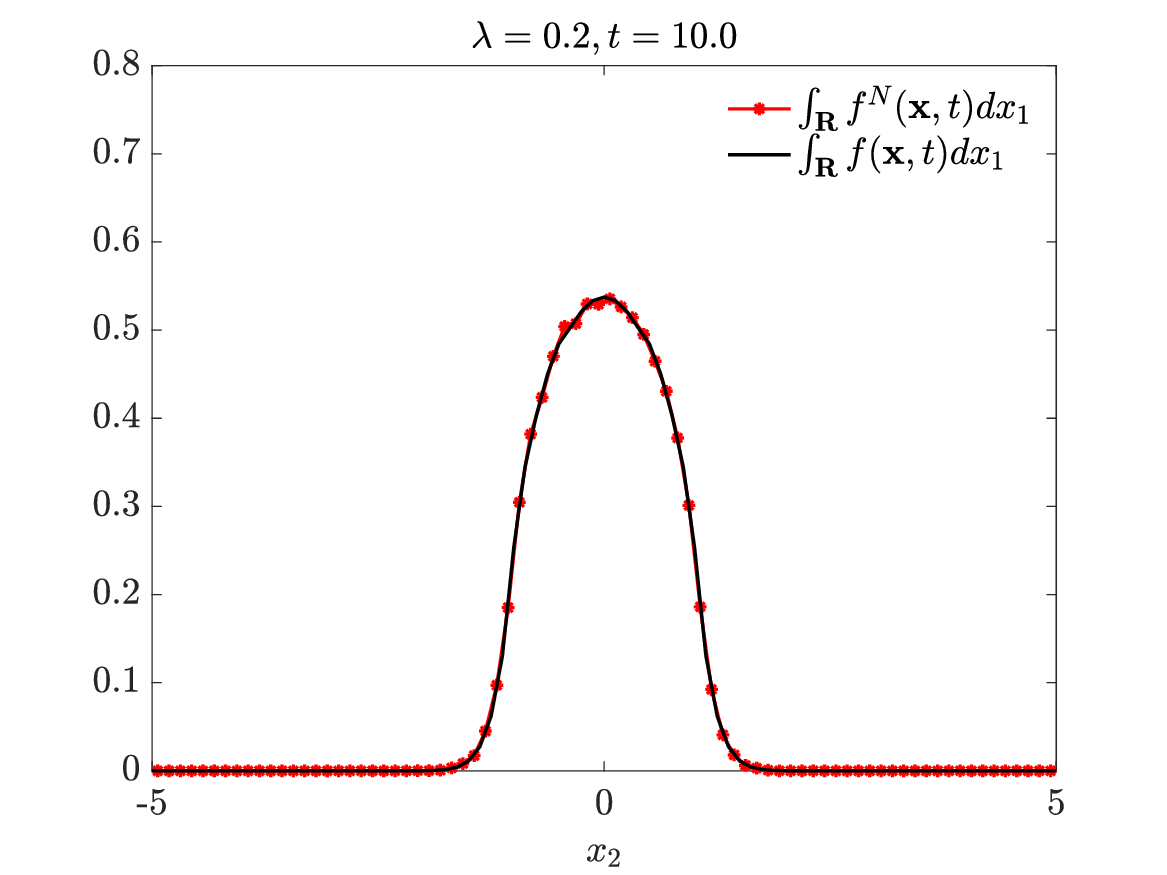}
\caption{\textbf{Test 1c}. Top row: evolution of the particles' distribution $f^N(\x,t)$ at times $t = 0,1,10$ obtained from \eqref{eq:part1} with $P\equiv1$.  Second row: evolution o the numerical solution of the Fokker-Planck equation \eqref{eq:FP_P1} over the same grid. Bottom row: evolution of the marginal densities of $f^N(\x,t)$ and $f(\x,t)$. We considered as target domain $D = \{\x \in \mathbb R^2: |\x-\x_0|\le 1\}$, $\x_0= (0,0)$, $\sigma^2 = 0.2$ and $\lambda = 0.2$, the nonconstant diffusion function $\kappa(\x,\tilde\x_0)$ has been defined in \eqref{eq:FP_simp_diffusion}. We introduced a grid of $N_x = 81$ gridpoints in $[-5,5]$,  time discretization of $[0,10]$ with $\Delta t = 10^{-2}$. Initial condition given in \eqref{eq:init2D}. }
\label{fig:2D_1}
\end{figure}

In this test we compare the evolution of the reconstructed density $f^N(\x,t)$  of the particles' system defined by $\{\x_i\}_{i=1}^N$ solution to \eqref{eq:part1} in the 2D case to the numerical solution of Fokker-Planck model \eqref{eq:FP}. We consider as initial distribution a sum of four Gaussian densities
\begin{equation}
\label{eq:init2D}
f(\x,0) = \sum_{k=1}^4 \dfrac{c_k}{2\pi\sigma_0^2}\exp\left\{ - \dfrac{1}{2\sigma_0^2} \left( (x_1-m_{x,k})^2 + (x_2-m_{y,k})\right)\right\}
\end{equation}
with $m_{x,1} = -m_{x,2} = m_{x,3} = -m_{x,4} = 1$ and $m_{y,1} = -m_{y,2} = -m_{y,3} = m_{y,4} = -1$, $\sigma_0^2 = 0.2$ and $c_1 = c_2 = 3/8$, $c_3 = c_4 = 1/8$. We fixed as a target domain $D = \{\x \in \mathbb R^2: |\x-\x_0|\le 1\}$ with $\x_0 = (0,0)$ and $N = 10^4$ particles whose dynamics, given by \eqref{eq:part1}, has been integrated over the time interval $[0,10]$ with an Euler-Maruyama scheme and $\Delta t =10^{-2}$. The particles' distribution $f^N(\x,t)$ have been reconstructed through standard 2D histograms over the interval $[-5,5]$ discretized with $N_x = 81$ gridpoints. Over the introduced grids in space and time, and starting from the initial distribution \eqref{eq:init2D}, we solved the Fokker-Planck model. The initial positions $\x_i(0)$ of the particles are sampled from \eqref{eq:init2D}. 

\begin{figure}
\centering
\includegraphics[scale = 0.20]{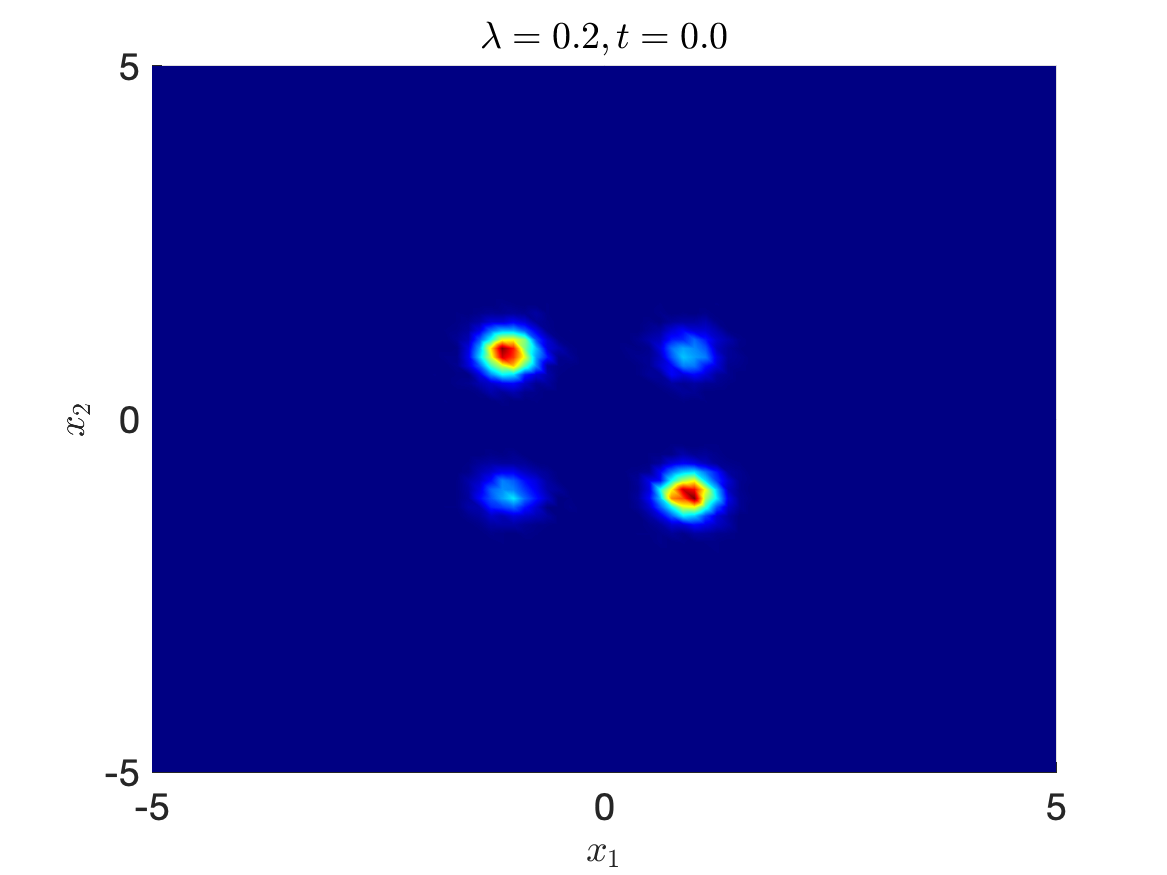}
\includegraphics[scale = 0.20]{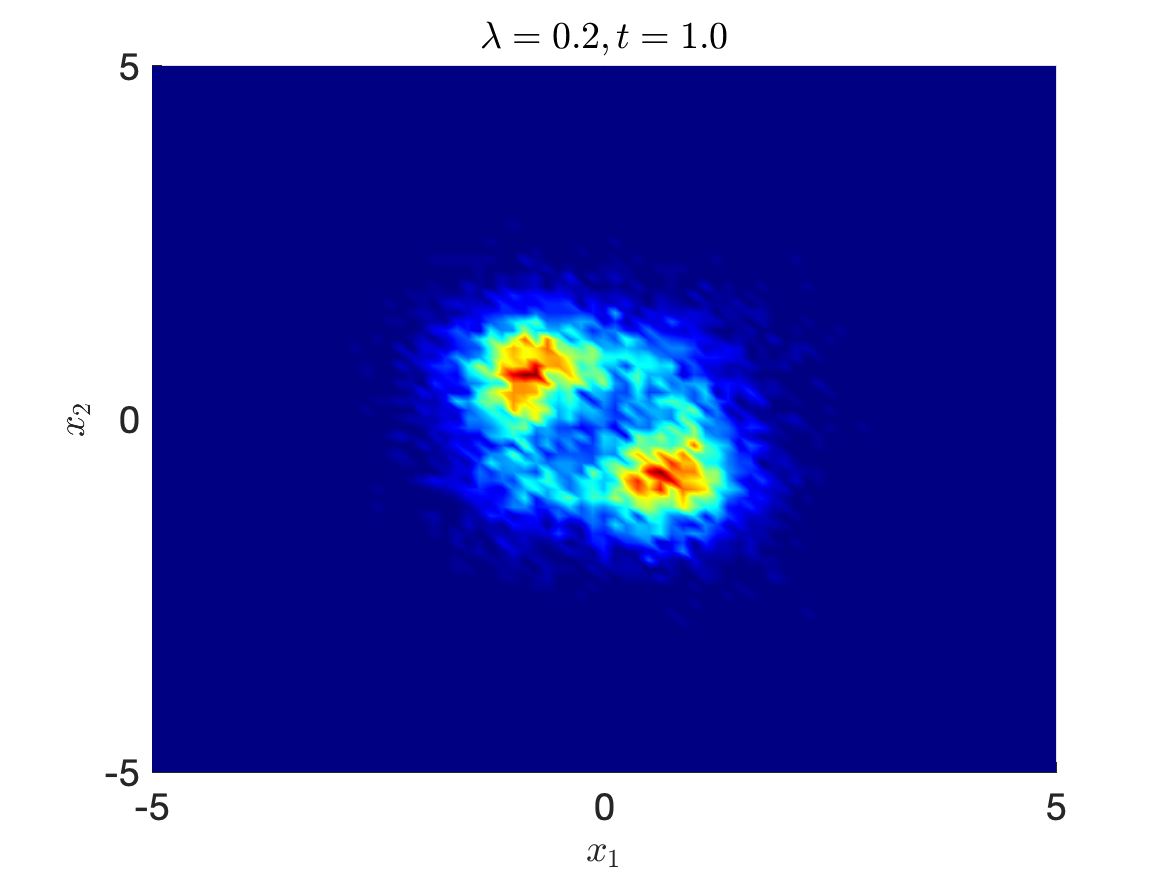}
\includegraphics[scale = 0.20]{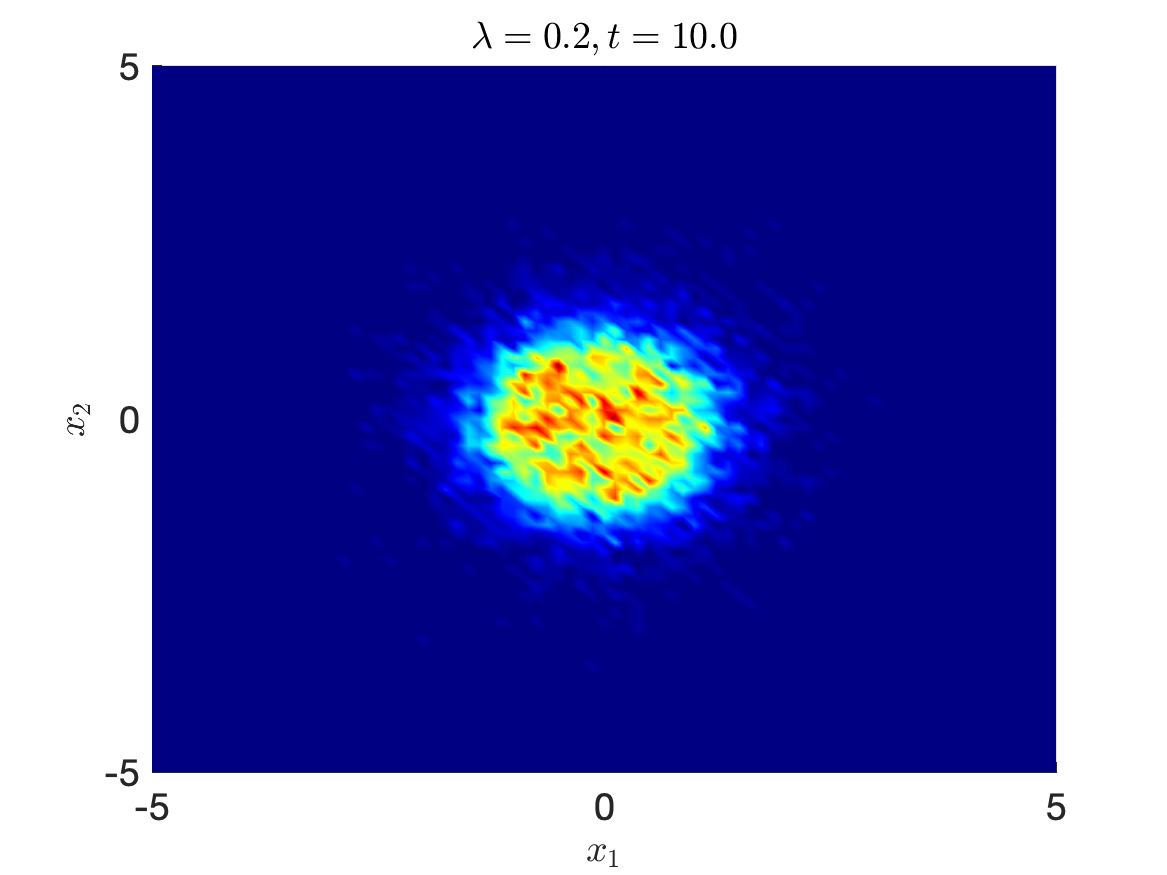}\\
\includegraphics[scale = 0.2]{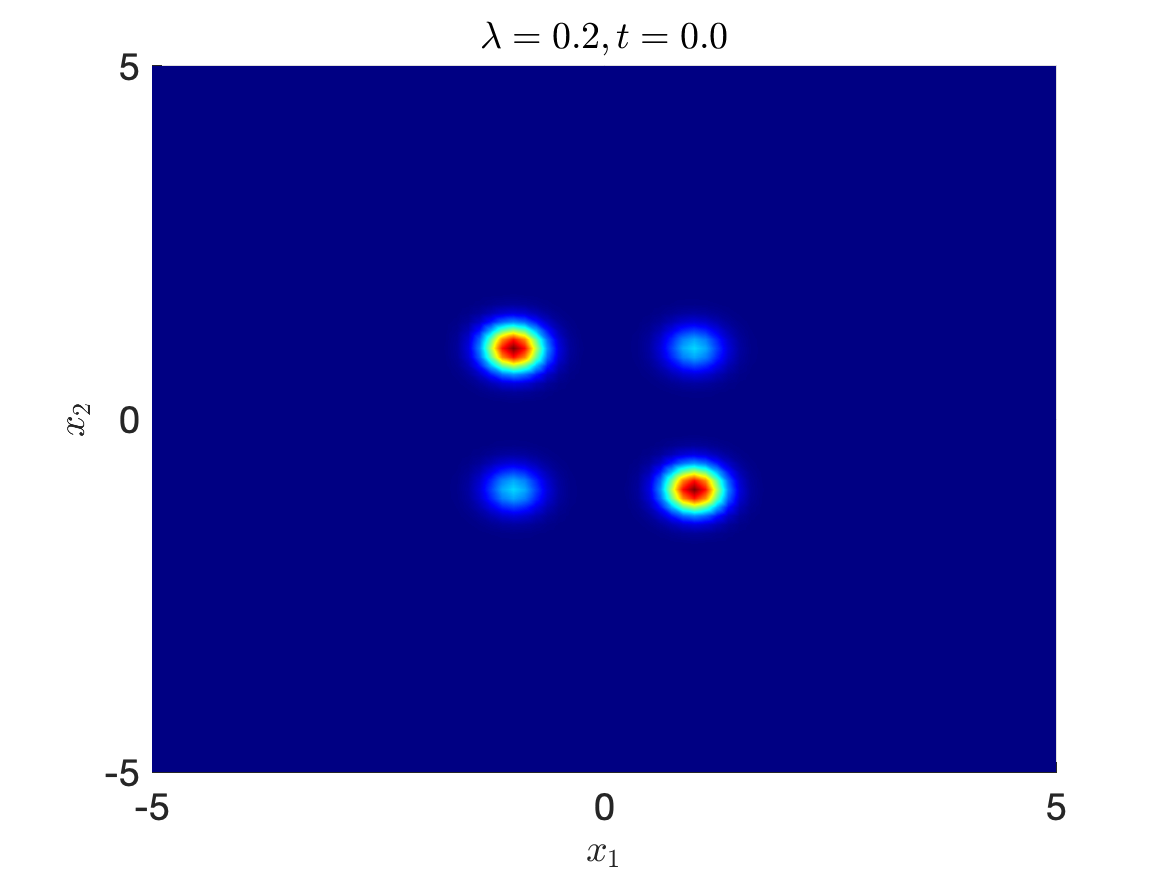}
\includegraphics[scale = 0.2]{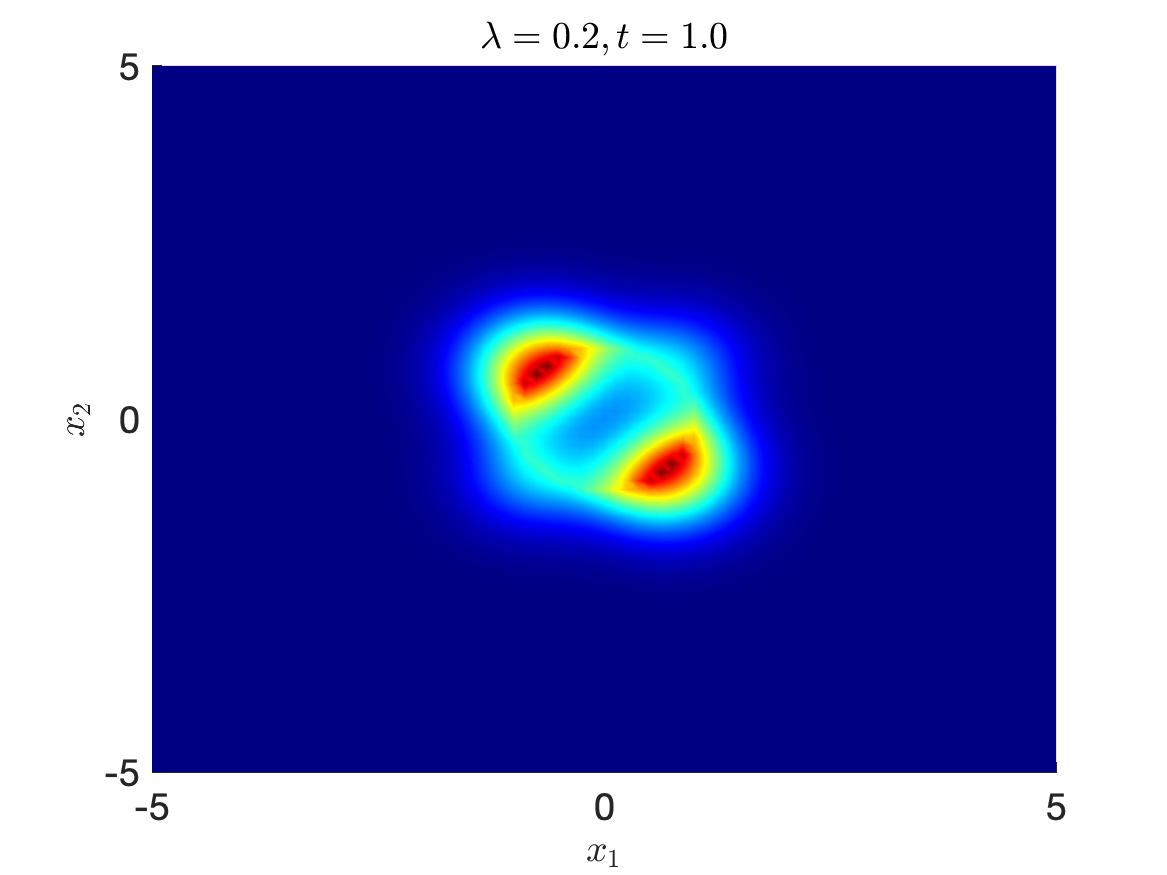}
\includegraphics[scale = 0.2]{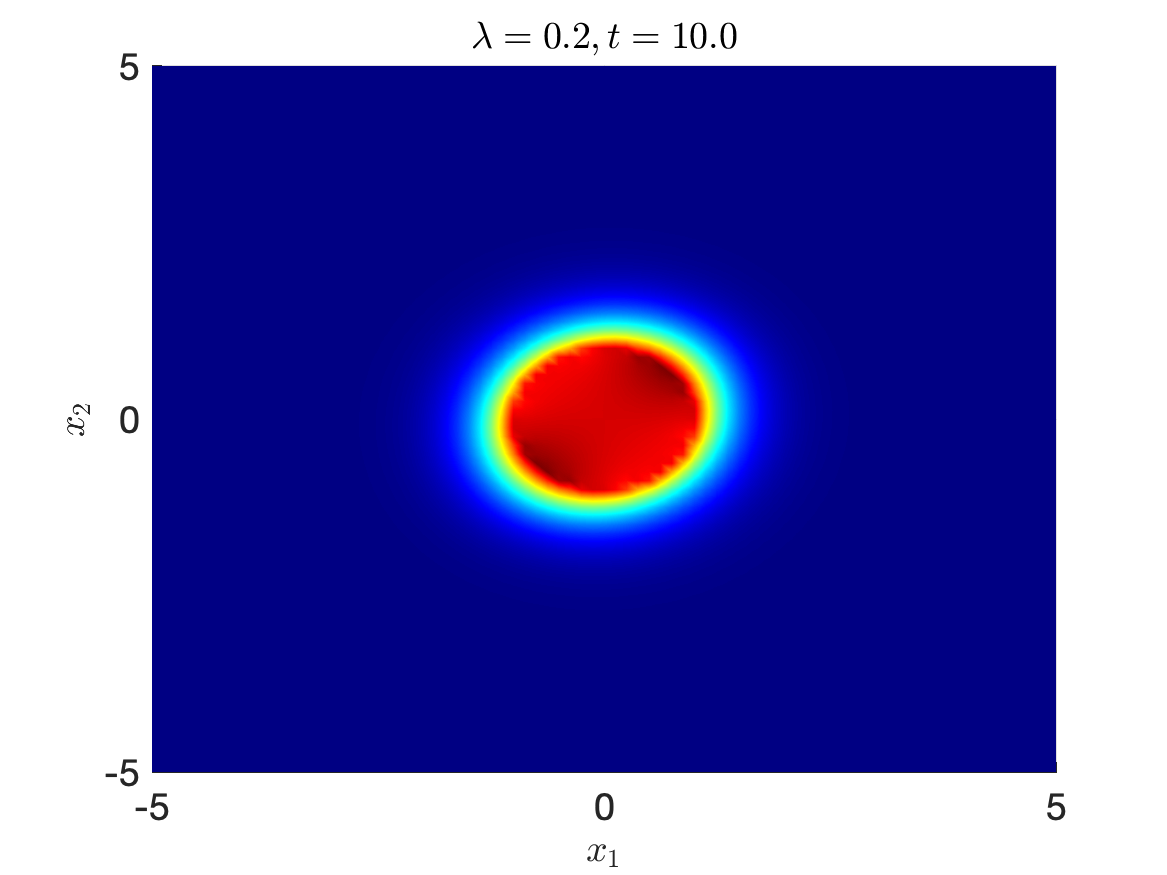}\\
\includegraphics[scale = 0.2]{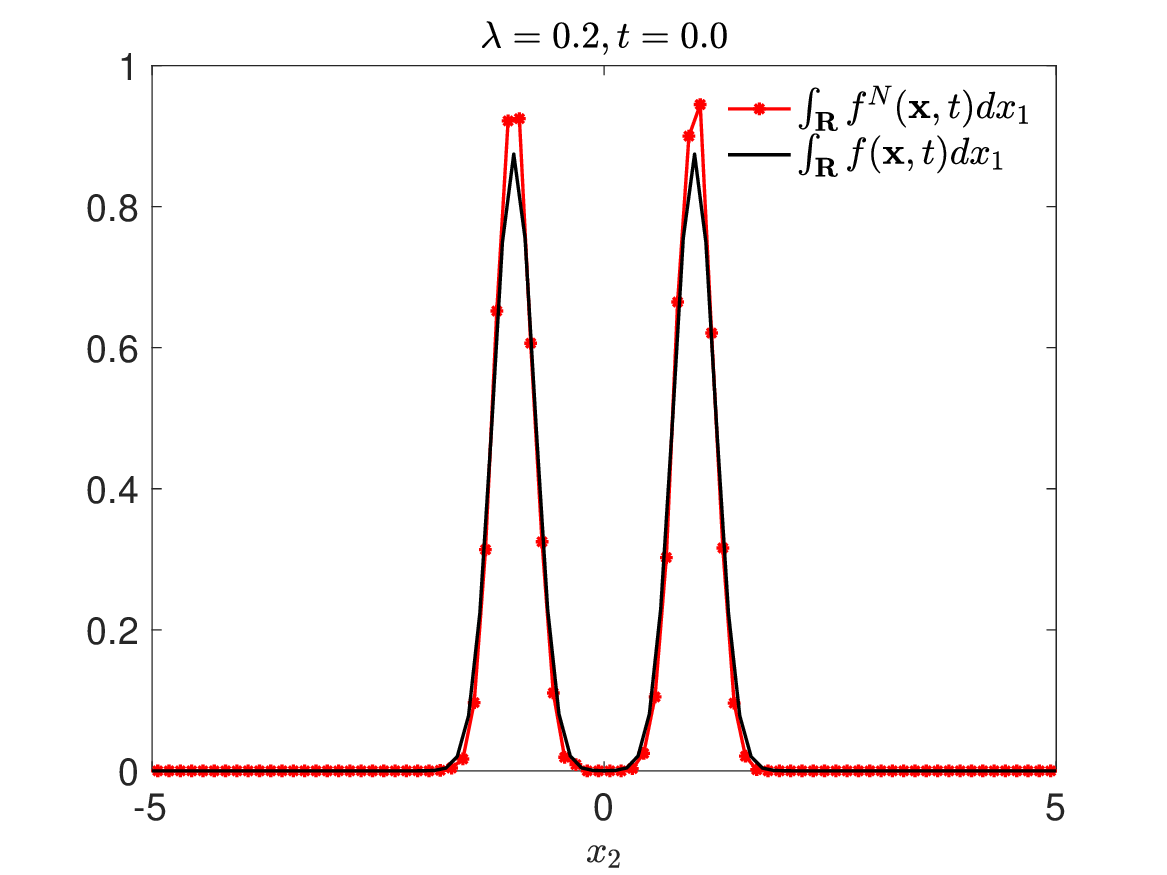}
\includegraphics[scale = 0.2]{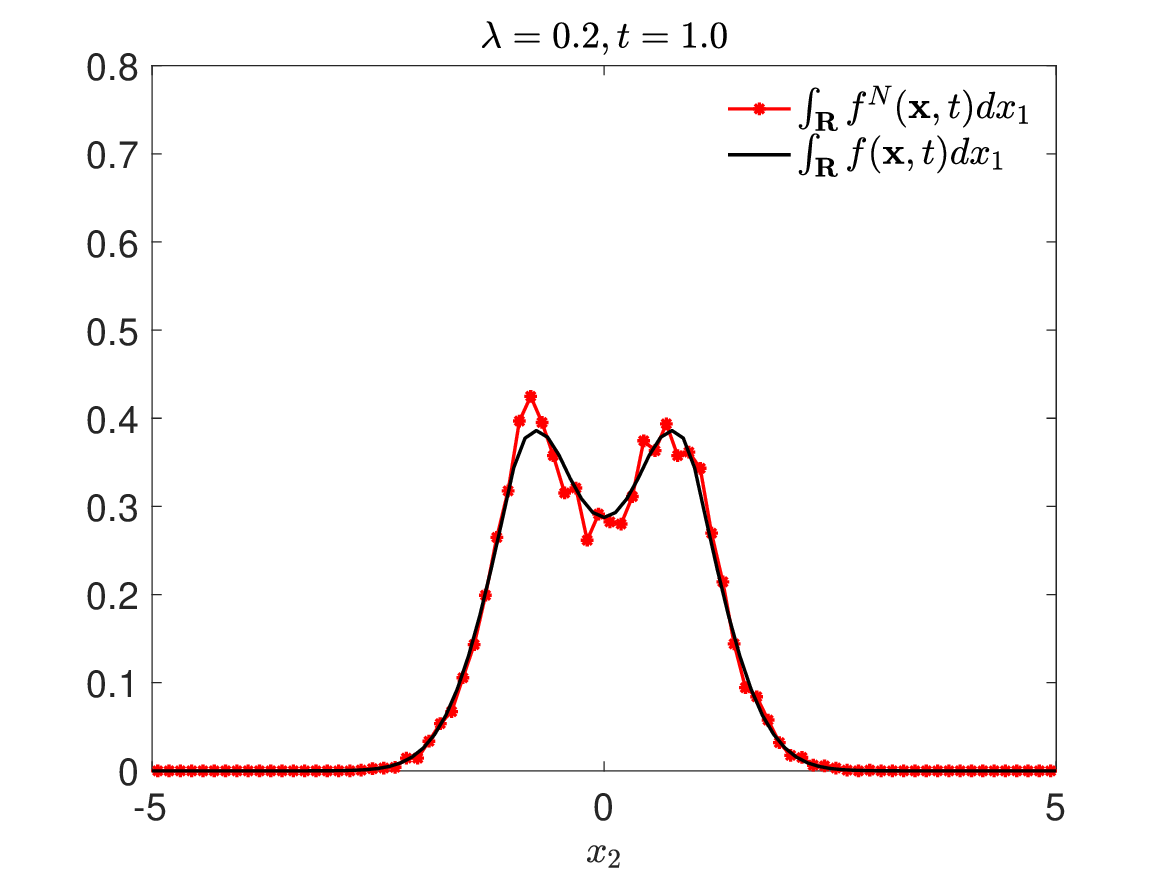}
\includegraphics[scale = 0.2]{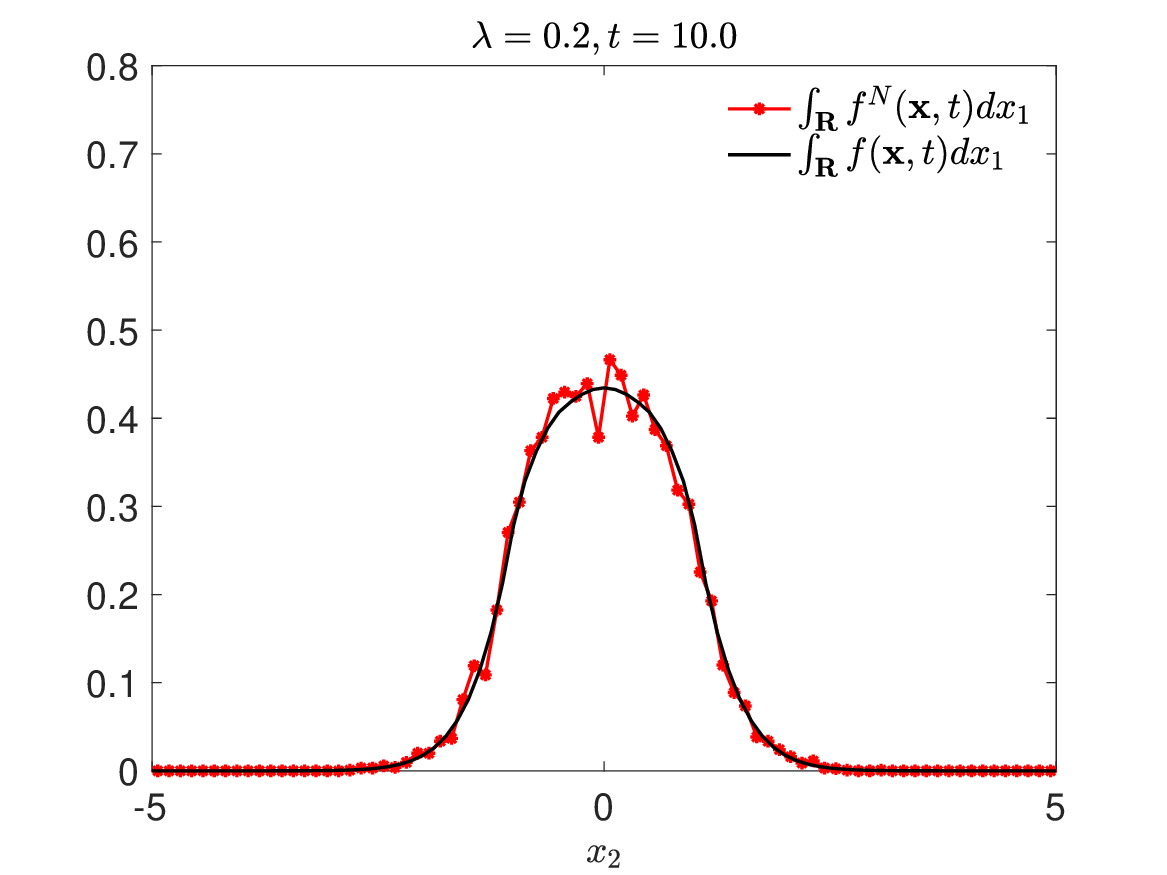}
\caption{\textbf{Test 1c}. Top row: evolution of the particles' distribution $f^N(\x,t)$ at times $t = 0,1,10$ obtained from \eqref{eq:part1} with $P(\x,\mathbf y)$ in \eqref{eq:CS} and $N = 10^4$ particles.  Second row: evolution o the numerical solution of the Fokker-Planck equation \eqref{eq:FP} over the same grid. Bottom row: evolution of the marginal densities of $f^N(\x,t)$ and $f(\x,t)$. We considered as target domain $D = \{\x \in \mathbb R^2: |\x-\x_0|\le 1\}$, $\x_0= (0,0)$, $\sigma^2 = 0.2$ and $\lambda = 0.2$. We introduced a grid of  $N_x = 81$ gridpoints in $[-5,5]$,  time discretization of $[0,10]$ with $\Delta t = 10^{-2}$. Initial condition given in \eqref{eq:init2D}.   }
\label{fig:2D_2}
\end{figure}

In the top row of Figure \ref{fig:2D_1} we report the evolutions of the reconstructed distribution $f^N(\x,t)$ of the particles' system \eqref{eq:part1} and, in the second row, of the numerical solution $f(\x,t)$ to the Fokker-Planck equation with nonconstant diffusion obtained in the uniform interction case $P\equiv 1$ \eqref{eq:FP_P1}. We depict the 2D distributions at times $t = 0,1,10$ and for fixed $\lambda = 0.2$. To better compare the results, in the bottom row, we show the agreement between the two marginals from which we may observe good agreement also in  2D  for uniform interactions. 


Finally, in the top row of Figure \ref{fig:2D_2} we report the evolutions of $f^N(\x,t)$ obtained from the particles' system \eqref{eq:part1} in the case $P(\x,\mathbf y)$ defined in \eqref{eq:CS}. In the second row, we report the numerical solution $f(\x,t)$ to the Fokker-Planck equation with nonuniform interactions  \eqref{eq:FP}. We depict the 2D distributions at times $t = 0,1,10$ and for fixed $\lambda = 0.2$. As before, in the bottom row, we show  the marginal distributions from which we may observe good agreement also in the case of nonconstant interaction forces. 


\subsection{Trends to equilibrium}

We compute equilibration rates for the introduced Fokker-Planck models \eqref{eq:FP}. In all the subsequent tests we fix as target domain $D = \{\x \in \mathbb R^d: |\x-\x_0\le 1|\}$, for simplicity we will  fix $\x_0$ to be the null vector in $\mathbb R^d$. We recall that the relative Shannon entropy is defined as follows
\begin{equation}
\label{eq:Htest}
H(f|f^\infty)(t) = \int_{\mathbb R^d} f(\x,t) \log \dfrac{f(\x,t)}{f^\infty(\x)}d\x.
\end{equation}
In the following we focus on the case $d = 1$, for which we showed entropic decay, and on the case $d = 2$, for which we will present computational results. In particular, we will adopt the We recall that the decay of the Shannon entropy has been proven for the Fokker-Planck model with nonconstant diffusion \eqref{eq:FP_P1}.

\begin{figure}
\centering
\includegraphics[scale = 0.3]{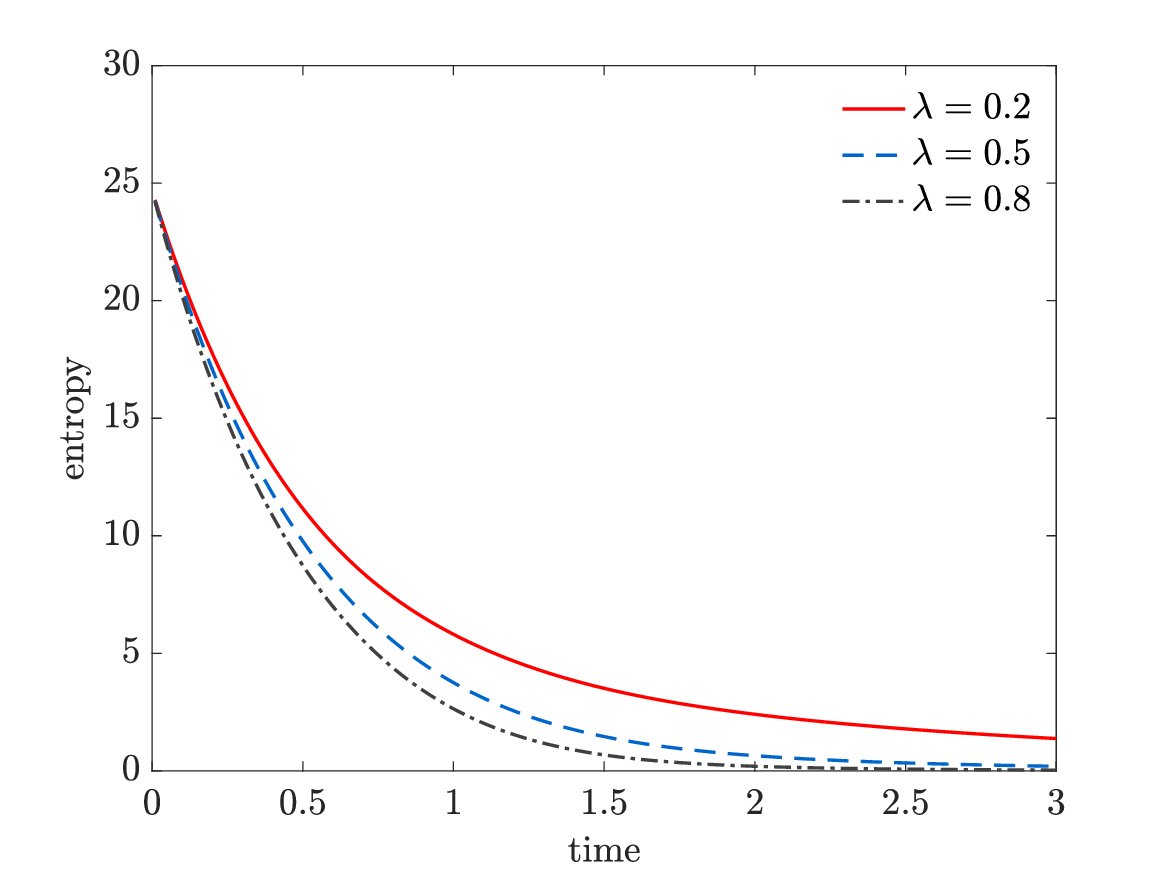}
\includegraphics[scale = 0.3]{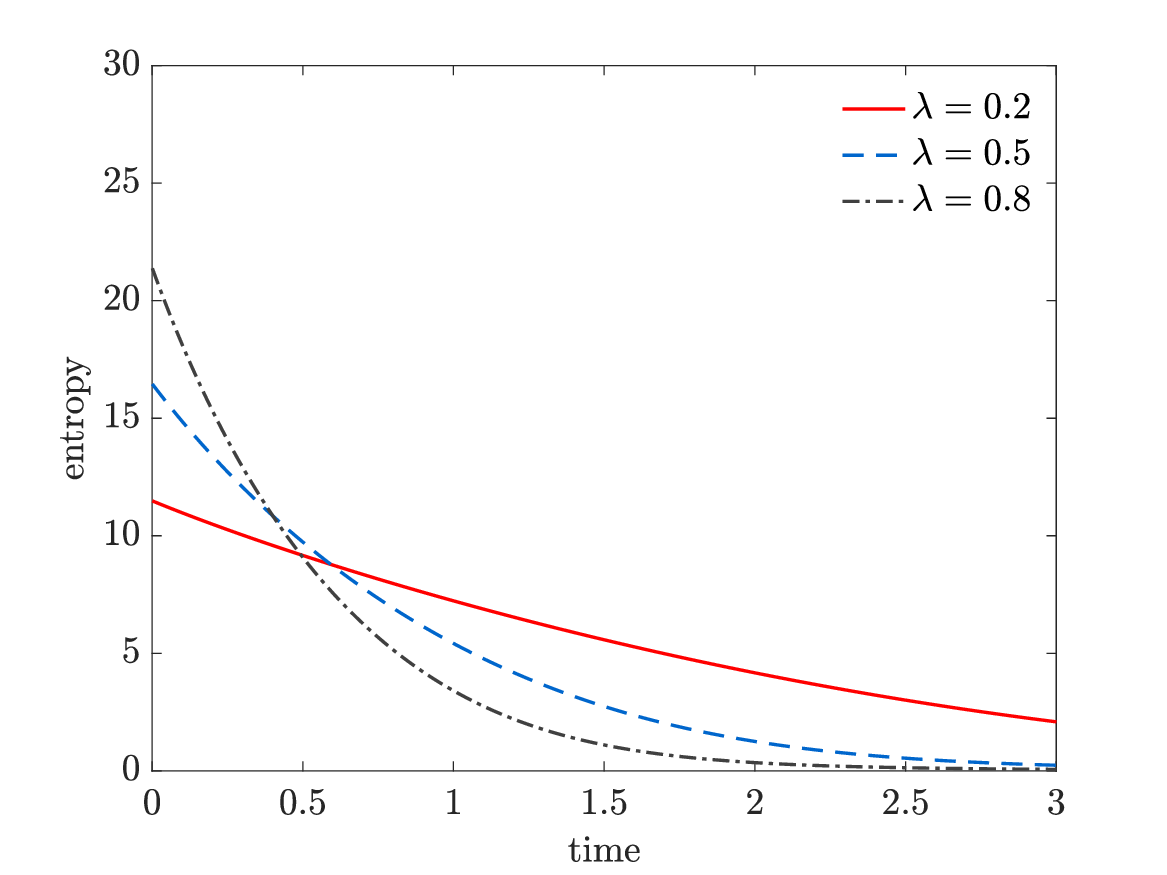}
\caption{\textbf{Test 2}. Left: evolution of the relative entropy functional $H(f|f^\infty)(t)$ obtained from \eqref{eq:FP_P1}, $d=1$, and analytical equilibrium $f^\infty(\x)$ defined in  \eqref{eq:finfty_1}. Right: evolution of the entropy functional $H(f|f^\textrm{ref}_\lambda)(t)$ for the Fokker-Planck equation \eqref{eq:FP}, $d=1$, with space-dependent $P(x,y)$ defined in \eqref{eq:CS}. The reference solution $f^\textrm{ref}_\lambda(x,T)$ have been obtained for a discrertization of $[-5,5]$ with $N_x=801$ gridpoints and $T = 50$. In both cases $\Delta t = \frac{\Delta x^2}{10}$ and the initial distribution is \eqref{eq:f0_test2}.  }
\label{fig:evo1D}
\end{figure}

We consider first the case $d = 1$ and we introduce the initial distribution 
\begin{equation}
\label{eq:f0_test21}
f_0(x) = \dfrac{3}{4\sqrt{2\pi\sigma_0^2}}\exp\left\{-\dfrac{|x+2|^2}{2\sigma_0^2}\right\} + \dfrac{1}{4\sqrt{2\pi\sigma_0^2}}\exp\left\{-\dfrac{|x-2|^2}{2\sigma_0^2}\right\}, 
\end{equation}
with  $x\in \mathbb R$, $\sigma_0^2 = \frac{1}{20}$. 

We numerically approximate the uniform interaction model \eqref{eq:FP_P1} over the interval $[-5,5]$ discretized by $N_x = 81$ gridpoints and over the time interval $[0,3]$ with $\Delta t = \frac{\Delta x^2}{10}$. The numerical integration has been performed with RK4 scheme. In \eqref{eq:Htest} we considered the analytical equilibrium defined by \eqref{eq:finfty_1}.  In the left plot of Figure \ref{fig:evo1D} we report the evolution of $H(f|f^\infty)$ for several $\lambda = 0.2,0.5,0.8$. We can observe that low values of  $\lambda \in (0,1]$ trigger slow convergence rates towards the analytical equilibrium \eqref{eq:finfty_1}. 

Furthermore, we evaluate numerically the convergence of the model \eqref{eq:FP} with Cucker-Smale-type interaction forces $P(x,y)$ in \eqref{eq:CS}. In this case, we remark that it is very difficult to obtain analytically the equilibrium distribution $f^\infty(x)$, which is then replaced with a reference large time solution $f^{\textrm{ref}}_\lambda(x,T)$, depending on the parameter $\lambda$, and obtained from the integration of \eqref{eq:FP} over $[0,T]$, $T = 50$, over the interval $[-5,5]$ with a discretization obtained with $N_x = 801$ gridpoints. In the right plot of Figure \ref{fig:evo1D} we report the evolution of $H(f|f^{\textrm{ref}}_\lambda)(t)$ where the approximation of $f(x,t)$ is considered on a more coarse grid with $N_x = 81$ gridpoints. Also in this case, for small $\lambda \in (0,1]$, the swarm is partially informed on the position of $x_0 \in  D$ whereas each agent senses all the other agents of the swarm in terms of the their relative distance. We can observe that the obtained Fokker-Planck equation still converges in time but at a lower rate. 

In the case $d =2$ we consier the initial distribution 
\begin{equation}
\label{eq:f0_test2}
\begin{split}
f_0(\x) =& \dfrac{3}{8\pi\sigma_0^2}\exp\left\{-\dfrac{|x_1-2|^2}{2\sigma_0^2} - \dfrac{|x_2+2|^2}{2\sigma_0^2} \right\} \\
&+ \dfrac{1}{8\pi\sigma_0^2}\exp\left\{-\dfrac{|x_1-2|^2}{2\sigma_0^2} -\dfrac{|x_2-2|^2}{2\sigma_0^2} \right\}, 
\end{split}
\end{equation}
with  $\x = (x_1,x_2)\in \mathbb R^2$, $\sigma_0^2 = \frac{1}{20}$. The target domain is $D = \{\x \in \mathbb R^2: |\x-\x_0|\le 1\}$, $\x_0 = (0,0)$. We numerically approximate the uniform interaction model \eqref{eq:FP_P1} over $[-5,5] \times [-5,5]$ discretized by $N_x = 81$ gridpoints in each space directions and over the time interval $[0,T]$, $T = 3$, with $\Delta t = \frac{\Delta x^2}{10}$. The numerical integration has been performed with RK4 scheme. In \eqref{eq:Htest} we considered the analytical equilibrium defined by \eqref{eq:finfty_1}.  In the left plot of Figure \eqref{fig:evo2D} we report the evolution of $H(f|f^\infty)(t)$ for several $\lambda = 0.2,0.5,0.8$. In 2D we may observe that only for large times the rate of convergence towards equilibrium is affected by the value of $\lambda$. 

For model with nonlocal interactions \eqref{eq:FP} with $P(\x,\mathbf y)$ as in \eqref{eq:CS} we compute the evolution of the relative entropy with respect to the reference large time solution $f^{\textrm{ref}}_\lambda(\x,T)$ with $T = 20$ computed through the integration of the model with a RK4 scheme over $[-5,5]\times [-5,5]$ and $N_x = 81$ in each space direction, $\Delta t = \frac{\Delta x^2}{10}$. We may observe that the evolution of $H(f|f^{\textrm{ref}}_{\lambda})(t)$ is still monotone decreasing and depends on the value of $\lambda \in (0,1]$. 

\begin{figure}
\centering
\includegraphics[scale = 0.3]{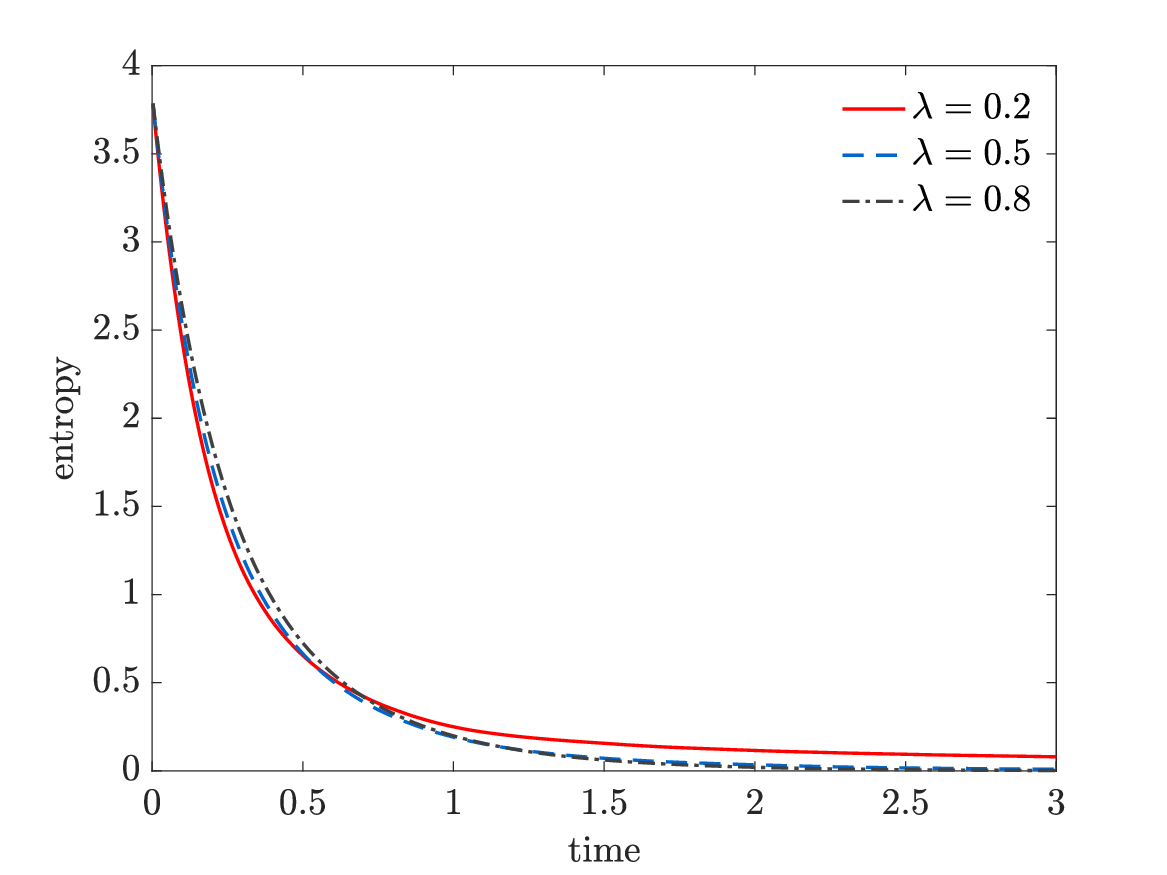}
\includegraphics[scale = 0.3]{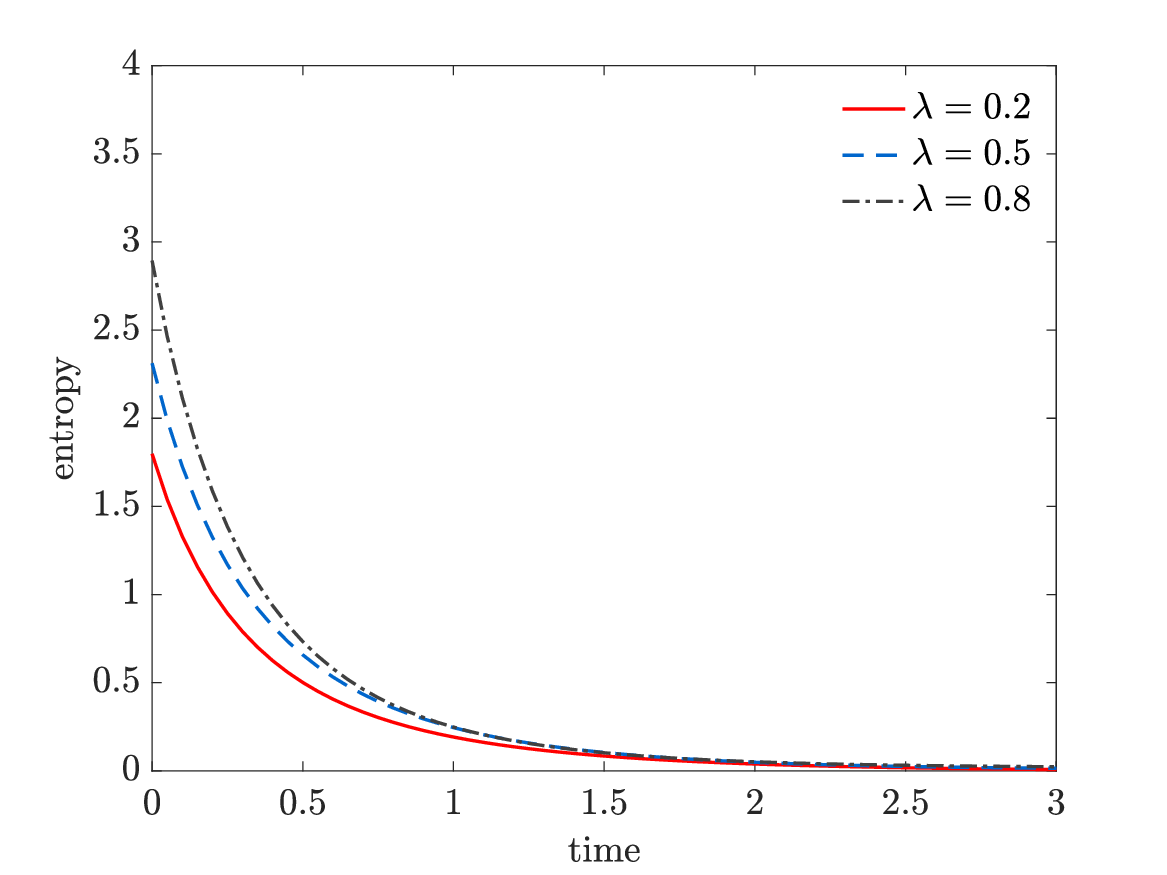}
\caption{\textbf{Test 2}. Left: evolution of the relative entropy functional $H(f|f^\infty)(t)$ obtained from \eqref{eq:FP_P1}, $d=2$, and analytical equilibrium $f^\infty(\x)$ defined in  \eqref{eq:finfty_1}. Right: evolution of the entropy functional $H(f|f^\textrm{ref}_\lambda)(t)$ for the Fokker-Planck equation \eqref{eq:FP}, $d=1$, with space-dependent $P(\x,\mathbf{y})$ defined in \eqref{eq:CS}. The reference solution $f^\textrm{ref}_\lambda(\x,T)$ have been obtained for a discrertization of $[-5,5] \times [-5,5]$ with $N_x=81$ gridpoints in each space direction and $T = 50$. In both cases $\Delta t = \frac{\Delta x^2}{10}$ and the initial distribution is \eqref{eq:f0_test2}.  }
\label{fig:evo2D}
\end{figure}

\section*{Conclusions}
In this paper, we investigated the large time behavior of a system of interacting particles  modeling the relaxation of a large swarm of robots, whose task is to cover uniformly a portion of a domain in $\R^d$, feeling each other in terms of their distance. The task has been modeled by a Fokker-Planck-type model with a linear drift and a time and position dependent diffusion coefficient,  which possesses a steady state distribution explicitly computable. For this new nonlocal Fokker--Planck equation, existence, uniqueness and positivity of a global solution have been proven in any dimension of the space,  with precise equilibration rates of the solution towards its quasi-stationary distribution in the one-dimensional case.  Numerical simulations then show that the swarm converges to the right equilibrium also in dimension $d=2$ and for a communication function dependent on the relative position of the agents. This suggests that the Fokker-Planck model is well-posed even in higher dimension of the space variable.  Extensions of the modelling approach to include sub-critical confinement and dynamics on manifolds are actually under study and will be presented elsewhere. 

\section*{Acknowledgements}
This work has been written within the activities of the GNFM group of INdAM (National Institute of High Mathematics). M.Z. acknowledges partial support of MUR-PRIN2020 Project No. 2020JLWP23.

\end{document}